\numberwithin{equation}{section} 
\font\tengothic=eufm10 scaled\magstep 1
\font\sevengothic=eufm7 scaled\magstep 1
\newtheorem{theorem}{Theorem}[section]
\newtheorem{lemma}[theorem]{Lemma}
\newtheorem{proposition}[theorem]{Proposition}
\theoremstyle{definition} 
\newtheorem{definition}[theorem]{Definition} 
\newtheorem{remark}[theorem]{Remark}
\newtheorem{example}[theorem]{Example}
\newtheorem{notation}[theorem]{Notation}
\newtheorem{notation and remark}[theorem]{Notation and Remark}
\newfont{\bg}{cmr10 scaled\magstep4}
\newcommand{\bigzerou}{\smash{\lower1.7ex\hbox{\bg 0}}}
\begin{document}

\newcommand\blfootnote[1]{%
  \begingroup
  \renewcommand\thefootnote{}\footnote{#1}%
  \addtocounter{footnote}{-1}%
  \endgroup
}

\begin{center}
{\Large \bf A binary tree of complete intersections \\[1ex] with the strong Lefschetz property}
\end{center}
\medskip\medskip\medskip

\begin{center}
{\sc Tadahito Harima} \\[1ex]
Department of Mathematics Education, Niigata University, \\
Niigata  950-2181, Japan \\
E-mail: harima@ed.niigata-u.ac.jp 
\medskip\medskip

{\sc Satoru Isogawa} \\[1ex]
National Institute of Technology, Kumamoto College, \\
Yatsushiro  866-8501, Japan \\
E-mail: isogawa@kumamoto-nct.ac.jp 
\medskip\medskip

{\sc Junzo Watanabe} \\[1ex]
Department of Mathematics, Tokai University, \\
Hiratsuka 259-1292, Japan \\
E-mail: watanabe.junzo@tokai-u.jp
\end{center}
\medskip\medskip\medskip

\begin{quote}
{\footnotesize 
{\sc Abstract.}\ 
In this paper we give a new family of  complete intersections which have the strong Lefschetz property. The family consists of (Artinian algebras defined by) ideals generated by power sum symmetric polynomials of consecutive degrees and of certain ideals naturally derived from them. This family has a structure of a binary tree and this observation is a key to prove that all members in it have the strong Lefschetz property. 
}
\end{quote}

\blfootnote{\noindent\textbf{Keywords}: Artinian complete intersection, power sum, strong Lefschetz property, 
Newton’s identity, central simple module, elementary symmetric polynomial.}
\blfootnote{\noindent \textbf{2020 Mathematics Subject Classification}: 13E10, 13C13.}

%
%

\section{Introduction}
\label{introduction}

Let $K$ be a field of characteristic zero, 
let $R=K[x_1,x_2,\ldots,x_n]$ be the polynomial ring in $n$ variables over $K$ with $\deg x_i = 1$ for each $i$, 
and let $I$ be a homogeneous ideal such that $A = R/I$ is Artinian. 
We say that 
$A= \bigoplus_{i=0}^c A_i$ has the {\em strong Lefschetz property} (SLP) 
if there exists a linear form $y\in A_1$ 
such that the multiplication map 
$\times y^d: A_i \rightarrow A_{i+d}$ has full rank 
for all $1\leq d \leq c-1$ and $0\leq i \leq c-d$. 
We might also say that $I$ has the SLP.
It is a long standing conjecture that 
every Artinian complete intersection should have the SLP, 
and this property as well as WLP has been studied by many authors
(e.g. \cite{HMMNWW}, \cite{HMNW}, \cite{I}, \cite{MN}, \cite{R}).

Put $R^{[n]}=K[x_1, x_2, \ldots, x_{n}]$.  
  The number $n$ of variables is not fixed and we regard the polynomial rings $\{R^{[n]}\}$
  as a family indexed by the positive integers
  and they are related by the natural inclusions $R^{[n]} \hookrightarrow R^{[n+1]}$.  
We consider only homogeneous ideals in $R^{[n]}$  with standard grading. 
Suppose that for each $n$ we are given a family $\mathcal F_{n}$ of ideals generated by a regular sequence of length $n$ in $R^{[n]}$. 
Put $\mathcal F = \bigcup _{n \geq 1} \mathcal F_{n}$. 
Temporarily we call $\mathcal F$ a {\em binary tree of complete intersections} 
if it satisfies the following conditions: 

\begin{itemize}
\item[(i)] 
For any $I \in  \mathcal F_{n}$ with $n \geq 2$, 
there exists $J \in  \mathcal F_{n-1}$ such that $I + (x_{n}) =  JR^{[n]} + (x_{n})$ as ideals  in $R^{[n]}$. 
(We do not require that $I+(x_n) \in  \mathcal F_{n}$.)
\item[(ii)] 
For any $I \in  \mathcal F_{n}$, if $x_n \not \in I$,  we have  $(I:x_n) \in  \mathcal F_{n}$. 
This means that $(I:x_n)$ is generated by a regular sequence of length $n$ in $R^{[n]}$, and it belongs to $\mathcal F_{n}$. 
(We allow ideals containing $x_n$ as members of $ \mathcal F_n $.)
\end{itemize}
We call $ \mathcal F$ a binary tree because for every $I \in  \mathcal F_{n}$, $n \geq 2$, 
we have a short exact sequence of Artinian $K$-algebras:
\[0 \to R^{[n]}/(I:x_n)  \mathop{\to}^{\phi} R^{[n]}/I \to R^{[n]}/(I + (x_n)) \to 0.\] 
Equivalently we have the ideal version of the exact sequence
\[0 \to \big(I:x_n\big) \mathop{\to}^{\phi}  I \to \big(I + (x_n)\big) / (x_n) \to 0.\]
In both sequences $\phi$ is defined by the multiplication by $x_{n}$. 
Thus $\mathcal F$ is a binary tree if we call $(I:x_n)$ the left child and $\big( I + (x_n) \big)/(x_n)$ the right child.  
A member $I \in  \mathcal F_n$ has the left child if and only if $x_n \not  \in I$. 
Any $I \in \mathcal F _1$ does not have a right child. 

It is easy to see that the family of monomial  complete intersections $ \mathcal F = \bigcup_{n \geq 1}  \mathcal F _n$, 
\[\ \mathcal F _n= \{(x_1^{a_1}, x_2^{a_2}, \ldots, x_n^{a_n}) \; | \; a_1 \geq 1, a_2 \geq 1, \ldots, a_n \geq 1 \}\]
is a binary tree.
For another such example, let $ \mathcal F_n$ be the family of ideals $I \subset R^{[n]}$ such that
$\textrm{{In}}(I)$ (``In'' is the initial ideal with respect to the reverse lexicographic order) is a monomial complete intersection.
Then  $ \mathcal F = \bigcup _{n \geq 1}  \mathcal F_n$ is a binary tree. 
 Indeed
$\textrm{In}(I:x_n)$ and $\textrm{In}(I+(x_n))$ are monomial complete intersections for every $I \in \mathcal F_n$ because $\textrm{In}(I)$ is a monomial complete intersection and it holds that  $\textrm{In}(I:x_n) = \textrm{In}(I):x_n$ and $\textrm{In}(I+(x_n)) = \textrm{In}(I)+(x_n)$. 
Hence it is easy to see that $ \mathcal F$ is a binary tree. Furthermore it is well known that the monomial Artinian complete intersections have the SLP (\cite{Stanley},\cite{W}). Hence by Proposition 33 of \cite{HW0} it follows that the ideals in this family have the SLP. 

It seems conceivable that  
it should be possible to prove that all members in a binary tree of complete intersections have the SLP, 
if not directly, perhaps with some additional assumptions on the family.
Indeed if we impose the following condition on $\mathcal F = \bigcup_{n \geq 1}  \mathcal F _n$  in addition to (i) and (ii) above
then by Theorem 1.2 of \cite{HW1} the conditions (i), (ii) and (iii) imply that all members in $\mathcal F$ have the SLP.  

\begin{itemize}
\item[(iii)] 
If $I \in \mathcal F_n$, then all central simple modules with respect to the image of $x_n$ in  $R^{[n]}/I$ 
are members of $\mathcal F_{n-1}$, 
that is, these modules themselves have the structure of an Artinian $K$-algebra 
and the defining ideals are members of $\mathcal F_{n-1}$.  
\end{itemize}

In this paper,  
we prove that 
the complete intersections defined by the power sum symmetric polynomials of consecutive degrees 
can be embedded in a family of ideals satisfying these three conditions. 
The notion of the central simple modules was introduced and some basic properties were studied in \cite{HW1}.
The central simple modules were used effectively 
in the papers \cite{altafi_iarrobino_khatami}, \cite{harima_wachi_watana_1}, \cite{HW2}, \cite{HW3} and \cite{I_M}.

In the case of power sums of consecutive degrees, 
it turns out that all central simple modules are principal modules, 
and furthermore each one of them is isomorphic to a complete intersection. 
The description of these complete intersections are given in Theorem~3.1.
Once Theorem 3.1 is proved, we start over with newly found complete intersections to find their central simple modules.
This procedure can be carried out to the end and as a result we can determine the family $\mathcal F$ of ideals which is a binary tree containing the ideals generated by power sums of
consecutive degrees.  This is stated in the sections 4 and 5.

As to the definition of a binary tree of ideals,  it should be noted  that we adapted the definition of the principal radical system developed by
Hochster--Eagon~\cite{hochster_eagon}. 
Their idea was used in H.\ Kleppe and D.\ Laksov~\cite{kleppe_laksov} in the same sense as the original due to Hochster--Eagon.   
The definition was slightly modified in the paper  ~\cite{mcdaniel_watana_1}.
The structure of the  binary tree of the monomial complete intersections was used
effectively in \cite{mcdaniel_watana_2}.

%
%

\section{Central simple modules and the strong Lefschetz property}
\label{survey}

We recall the notion of central simple modules 
for a standard graded Artinian $K$-algebra 
and review a sufficient condition for the SLP in terms of central simple modules. 
For details we refer the reader to \cite{HW1} and \cite{HW2}. 

\begin{definition}
Let $A$ be a standard graded Artinian $K$-algebra 
and let $y$ be a linear form of $A$. 
Let $p={\rm min} \{i\mid y^i=0\}$. 
Then we have a descending chain of ideals in $A$: 
\[
A=(0:y^p)+(y) \supset (0:y^{p-1})+(y) \supset 
\cdots \supset (0:y)+(y) \supset (y). 
\]
From among the sequence of successive quotients 
$$
\frac{(0:y^{p-i})+(y)}{(0:y^{p-i-1})+(y)}
$$
for $i=0,1,2,\ldots,p-1$, 
pick the non-zero spaces and rename them 
\[
U_1, U_2, \ldots, U_s. 
\] 
Note that $U_1=A/(0:y^{p-1})+(y)$. 
We call the graded $A$-module $U_i$ 
the {\em $i$-th central simple module of $(A,y)$}. 
\end{definition}

\begin{remark}\label{Re_csm} 
With the same notation as above, 
let $q$ be the least integer such that $((0:y^{q})+(y)) \neq (y)$. 
Then we have the following by the definition. 
\begin{itemize}
\item[(1)] 
$U_s=((0:y^{q})+(y))/(y)$. 
\item[(2)] 
If $(0:y^q)=A$, 
then $(A,y)$ has only one central simple module which is isomorphic to $A/yA$. 
Such examples are given in Lemma 6.1 of \cite{HW1} and Theorem~\ref{Th3-1}(1) stated later. 
\item[(3)] 
If $s>1$, 
then it is possible to regard $U_1, U_2,\ldots,U_{s-1}$
as the full set of the central simple modules of $(A/(0:y^q), \overline{y})$, 
where $\overline{y}$ is the image of $y$ in $A/(0:y^q)$. 
\item[(4)]
Furthermore, 
it is easy to see that every central simple module of  
$(A/(0:y^{j}), \bar{y})$ appears as a central simple  module of $(A, y)$, 
where $\bar{y}$ is the image of $y$ in  $A/(0:y^{j})$ for each $j\geq 1$.
\end{itemize}
\end{remark}

\begin{definition}
Let $A$ be a standard graded Artinian $K$-algebra 
and let $V=\bigoplus_{i=a}^{b} V_i$ be a finite graded $A$-module with $V_a\neq (0)$ and $V_b\neq (0)$. 
Then, 
we say that $V$ has the SLP as an $A$-module 
if there exists a linear form $y$ of $A$ 
such that the multiplication map $\times y^d: V_i \rightarrow V_{i+d}$ 
has full rank for all $1\leq d\leq b-a$ and $a\leq i\leq b-d$.  
\end{definition}

\begin{theorem}[\cite{HW1}, Theorem 1.2] \label{csm}
Let $A$ be a standard graded Artinian Gorenstein $K$-algebra. 
Then the following conditions are equivalent: 
\begin{itemize}
\item[\rm{(i)}] 
$A$ has the SLP. 
\item[\rm{(ii)}] 
There exists a linear form $y$ of $A$ such that 
all the central simple modules of $(A,y)$ have the SLP. 
\end{itemize}
\end{theorem}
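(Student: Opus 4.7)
The plan is to establish both implications, treating (i) $\Rightarrow$ (ii) as a direct consequence of restricting the strong Lefschetz map to filtration layers, and (ii) $\Rightarrow$ (i) as the substantive direction requiring the Gorenstein property. Throughout I would work with the filtration
\[
A = (0:y^p) + (y) \supset (0:y^{p-1}) + (y) \supset \cdots \supset (0:y) + (y) \supset (y),
\]
whose nonzero successive quotients are exactly the central simple modules $U_1, \ldots, U_s$.

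For (i) $\Rightarrow$ (ii), suppose $y$ is a strong Lefschetz element for $A$, so $\times y^d : A_j \to A_{j+d}$ has full rank for every admissible pair $(d,j)$. Since each term in the filtration is $y$-stable, the maps $\times y^d$ descend to every successive quotient, and in particular to each central simple module $U_i$. I would verify full rank on $(U_i)_k$ by a dimension count: the SLP on $A$ forces $(0:y^{p-i}) \cap (y) = y \cdot (0:y^{p-i-1})$, so the filtration quotients measure genuine kernels and cokernels of $y$-multiplication; combined with the symmetric Hilbert function coming from the Gorenstein hypothesis, this shows each $U_i$ has a symmetric Hilbert function, and then a single full-rank map in any degree propagates to all of $U_i$.

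For the main direction (ii) $\Rightarrow$ (i), I would induct on $p = \min\{i : y^i = 0\}$. The base case $p = 1$ is immediate because $A = A/(y) = U_1$. For the inductive step, let $q$ be the least integer with $(0:y^q) + (y) \neq (y)$, so that by Remark 2.2(3) the central simple modules of $(A,y)$ other than $U_s$ coincide with those of $(A/(0:y^q), \overline{y})$; induction yields the SLP for $A/(0:y^q)$. Combining this with the assumed SLP of $U_s = ((0:y^q) + (y))/(y)$ and the short exact sequence
\[
0 \to (0:y^q) \to A \to A/(0:y^q) \to 0,
\]
together with the Gorenstein identification of the Hilbert function of $(0:y^q)$ with that of $A/y^q A$ read in reverse, I would bootstrap full rank of $\times y^d$ from the quotient and the bottom layer to all of $A$.

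The principal obstacle is that $A/(0:y^q)$ is not Gorenstein in general, so the induction hypothesis cannot be applied verbatim. The natural remedy is to reformulate the statement as a claim about the Jordan block decomposition of multiplication by $y$: for a Gorenstein $A$, the Jordan blocks of $y$ are forced to pair up symmetrically in a way dictated by the Hilbert function, and the SLP of each $U_i$ asserts precisely that every Jordan block attains its forced length. The equivalence then becomes an accounting identity between the Jordan block sizes of $y$ on $A$ and the Hilbert functions of the central simple modules, which seems to be the cleanest framework in which to carry the induction through.
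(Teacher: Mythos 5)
This theorem is imported into the paper from \cite{HW1} (Theorem 1.2 there) and is stated here without proof; there is no ``paper's own proof'' to compare against, so what follows is only an assessment of the internal soundness of your sketch.

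Neither implication in your proposal is actually established. For (i) $\Rightarrow$ (ii), the assertions that $(0:y^{p-i}) \cap (y) = y\cdot(0:y^{p-i-1})$ and that a single full-rank map propagates to all degrees of $U_i$ are left as unverified ``dimension counts''; both are nontrivial, and the paper's own remark immediately after the theorem (citing Example 6.10 of \cite{HW1}) records that SLP of $A$ does \emph{not} force SLP of the central simple modules for an arbitrary linear form $y$, so your argument must invoke the fact that $y$ is specifically a Lefschetz element at some concrete step, which it never does. For (ii) $\Rightarrow$ (i), you correctly identify the obstruction---$A/(0:y^q)$ need not be Gorenstein, so the induction hypothesis cannot be applied verbatim---but your proposed remedy is only a gesture: you state that the claim ``becomes an accounting identity'' between the Jordan block sizes of $\times y$ on $A$ and the Hilbert functions of the $U_i$, and that this ``seems to be the cleanest framework,'' without formulating the identity, proving it, or showing how it closes the induction when the intermediate quotient is no longer Gorenstein. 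Since (ii) $\Rightarrow$ (i) carries the entire weight of the theorem in the paper's application (Theorem 5.3), this is the substantive missing piece. The Jordan-type reformulation is the right circle of ideas, but absent a precise lemma stating what the graded Hilbert functions of the central simple modules determine about the Jordan partition of $\times y$, and how the Gorenstein symmetry forces that partition to be the conjugate of the Hilbert function of $A$, what you have is an outline rather than a proof.
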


\begin{remark}
Let $A$ be an Artinian Gorenstein $K$-algebra with the SLP. 
Then it is not always true that 
all central simple modules of $(A,y)$ have the SLP 
for all linear forms $y$ of $A$ 
(see Example 6.10 in \cite{HW1}).   
\end{remark}

%
%

\section{Complete intersections defined by power sums of consecutive degrees}
\label{consecutive}

Let $R=K[x_1,x_2,\ldots,x_n]$ be the polynomial ring over a field K of characteristic zero. 
Throughout this paper we work in the polynomial ring $\tilde{R}=R[z]$, 
where $z$ is another indeterminate. 

Define $e_i \in R$ to be the elementary symmetric polynomial of degree $i$ {\em with the sign} 
so that 
$$
\prod_{i=1}^n(z-x_i)=e_0z^n + e_1z^{n-1} + e_2z^{n-2} + \cdots + e_{n-1}z + e_n, 
$$
i.e., 
$$
e_0=1, \ e_i=(-1)^i\sum_{1\leq j_1<j_2< \cdots <j_i\leq n}x_{j_1}x_{j_2} \cdots x_{j_i}
$$
for $1\leq i \leq n$.
Let $p_i \in R$ and $\tilde{p}_i \in \tilde{R}$ be the power sums of degree $i \geq 0$, i.e., 
$$
p_i=x_1^i + x_2^i + \cdots + x_n^i, \ \ \tilde{p}_i=p_i+z^i. 
$$
\medskip

The purpose of this section is to give a proof of the following theorem. 

\begin{theorem}\label{Th3-1} 
Let $a$ be a positive integer, 
let $I$ be the ideal of $\tilde{R}$ generated by 
$$
\{\tilde{p}_a, \tilde{p}_{a+1}, \ldots, \tilde{p}_{a+n}\}
$$ 
and $\overline{z}$ the image of $z$ in the Artinian complete intersection $K$-algebra $A=\tilde{R}/I$. 
\begin{itemize}
\item[(1)] 
If $a=1$, then $(A, \bar{z})$ has only one central simple module $U_1$. 
This is given by $U_1 \cong R/{(e_1, e_2, \ldots, e_n)}$.

\item[(2)] 
Assume $a \geq 2$. Then $(A, \bar{z})$ has $n+1$ simple modules.
The j-th central simple module is given by
$$
U_j \cong \displaystyle\frac{R}{(\underbrace{p_{a-1}, p_a, \ldots, p_{a+j-3}}_{j-1}, \underbrace{e_j, e_{j+1}, \ldots, e_n}_{n-j+1})} 
$$
for $j=1,2,\ldots, n+1$.
\end{itemize}
\end{theorem}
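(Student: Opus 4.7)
The plan is to reduce the statement to an explicit computation of the chain $(0:\bar z^q) + (\bar z)$ in $A$, exploiting the interplay between the power sums $p_k$ and elementary symmetric polynomials $e_k$ provided by Newton's identity in characteristic zero. The preliminary step is to prove that $p_m = -z^m$ in $A$ for every $m \ge a$. The cases $m = a, a+1, \ldots, a+n$ are the defining relations $\tilde p_m = 0$. For $m > a+n$, one inducts on $m$ using Newton's recurrence $p_m + e_1 p_{m-1} + \cdots + e_n p_{m-n} = 0$ together with the key identity
\[
z^a\, E(z) = 0 \quad \text{in } A,
\]
where $E(z) := \prod_{i=1}^n(z - x_i) = z^n + e_1 z^{n-1} + \cdots + e_n$; this identity follows directly from expanding $z^a E(z)$ and applying Newton at index $k = a+n$ after substituting $p_{a+i} = -z^{a+i}$ for $0 \le i \le n$. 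As a consequence,
\[
A/(\bar z) \;\cong\; R/(p_a, p_{a+1}, \ldots, p_{a+n}) \;=\; R/(p_a, \ldots, p_{a+n-1}),
\]
the last equality by the same Newton identity used to eliminate $p_{a+n}$.

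For part~(1) with $a = 1$: Newton at $k = 1, \ldots, n$ gives $(p_1,\ldots,p_n) = (e_1,\ldots,e_n)$ in $R$ and $p_{n+1} \in (p_1,\ldots,p_n)$, so $A/(\bar z) \cong R/(e_1,\ldots,e_n)$. The same Newton identities, combined with $p_k = -z^k$, inductively yield $e_k = z^k$ in $A$; this leads to $(w - \bar z)\prod_{i=1}^n(w - \bar x_i) = w^{n+1}$ in $A[w]$, and hence $A$ is isomorphic to the $S_{n+1}$-coinvariant algebra via $\bar z \leftrightarrow x_{n+1}$, forcing $\bar z^{n+1} = 0$. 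By Remark~\ref{Re_csm}(2), to conclude that $(A,\bar z)$ has the single central simple module $U_1 \cong A/(\bar z) \cong R/(e_1,\ldots,e_n)$ it suffices to verify $(0:\bar z^q) \subseteq (\bar z)$ for $1 \le q \le n$. This I would derive from the known strong Lefschetz property of the coinvariant algebra (any coordinate is a Lefschetz element), which forces $\cdot \bar z^q$ to have maximal rank on each graded piece and thus every kernel element to be divisible by $\bar z$.

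For part~(2) with $a \ge 2$: the filtration is described by producing, for each $j \in \{1,\ldots,n+1\}$, an explicit element $\xi_j$ and an index $q_j$ such that $\xi_j \in (0:\bar z^{\,q_j}) \setminus \bigl((0:\bar z^{\,q_j - 1}) + (\bar z)\bigr)$ and whose class generates the $j$-th central simple module. Natural candidates arise from the family of identities
\[
E(z)\, z^{a-j} \;=\; e_{n-j+1}\tilde p_{a-1} + e_{n-j+2}\tilde p_{a-2} + \cdots + e_n\tilde p_{a-j} \qquad (1 \le j \le a-1),
\]
proved by the same expansion-plus-Newton argument as in the preliminary step (now using Newton at $k = a+n-j$). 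When $n+1 > a$ -- so that these simple identities do not cover every $j$ -- one supplements them with elements obtained by iterating the construction on the subquotients $A/(0:\bar z^q)$ described in Remark~\ref{Re_csm}(3)--(4); these subquotients are themselves of the form treated in the theorem with smaller values of $a$ and $n$, giving an inductive foothold. The identification $U_j \cong R/(p_{a-1},\ldots,p_{a+j-3},e_j,\ldots,e_n)$ is then carried out by constructing a natural surjection from the right-hand ring onto the claimed $U_j$, and verifying bijectivity via a Hilbert-series computation together with the dimension identity $\sum_{j=1}^{n+1}\dim_K U_j = \dim_K A/(\bar z)$.

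The main obstacle will be part~(2): pinning down the correct indices $q_j$, verifying that the filtration has exactly $n+1$ strict jumps (no extras and none missing), and confirming that each presentation $R/(p_{a-1}, \ldots, p_{a+j-3}, e_j, \ldots, e_n)$ truly defines a complete intersection whose Hilbert function matches the $j$-th graded piece of the chain. The regime $n+1 > a$, where the simple formula $E(z) z^{a-j}$ no longer reaches every $j$ and one must descend through subquotients, is the most delicate technical point and the place where an inductive argument on $(a,n)$ becomes essential.
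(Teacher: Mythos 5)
Your preliminary reductions are sound: deriving $p_m = -\bar z^m$ for $m \geq a$, the identity $\bar z^a E(\bar z) = 0$ in $A$ via Newton at index $a+n$, and (for $j \leq a-1$) the congruence $E(z)z^{a-j} \equiv \sum_{\ell=1}^{j} e_{n-j+\ell}\,\tilde p_{a-\ell} \pmod I$ are all correct, and they mirror computations that appear (in matrix form) in the paper. However, there are two genuine gaps.

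First, in part (1) the step \emph{``SLP forces $\times\bar z^q$ to have maximal rank on each graded piece and thus every kernel element to be divisible by $\bar z$''} is a non sequitur: maximal rank does not constrain the kernel to lie in $(\bar z)$. For instance, in $K[x,y]/(x^2,y^3)$ the form $\ell = x+y$ is a strong Lefschetz element, yet $(0:\ell^2)$ contains $2x-y$ in degree $1$, which is not in $(\ell)$. So $(0:\bar z^q)\subseteq(\bar z)$ is not a consequence of SLP. What does work, and is what the paper uses, is the structural fact that after the substitution $w=z$ the ideal becomes $I=(\tilde e_1,\ldots,\tilde e_n,z^{n+1})$ with $\{\tilde e_1,\ldots,\tilde e_n,z\}$ a regular sequence. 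Hence $I:z^q=(\tilde e_1,\ldots,\tilde e_n,z^{n+1-q})$, so $(0:\bar z^q)=(\bar z^{\,n+1-q})\subseteq(\bar z)$ for $q\leq n$ and $\bar z^{\,n+1}=0$; equivalently, $A$ is a free $K[\bar z]/(\bar z^{n+1})$-module. You need to replace the SLP appeal with this argument.

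Second, in part (2) the identities you exhibit only reach $j\leq a-1$, which is far short of the required range $j=1,\ldots,n+1$ whenever $a\leq n+1$ (and in particular covers nothing but $j=1$ when $a=2$, the first interesting case). The fallback of ``iterating on the subquotients $A/(0:\bar z^q)$'' is not carried out, and those subquotients are not again of the same shape as the original $A$, so this does not straightforwardly give an inductive foothold. The decisive device in the paper that you are missing is to bring in the successive $z$-derivatives $f^{(k)}=\partial^k f^{(0)}/\partial z^k$ of $f^{(0)}=E(z)$: Proposition~\ref{Prop3-5} shows inductively (via the matrix identity $z^a f^{(1)}=f^{(0)}\phi + \mathbf{e}\mathbf{P}\mathbf{u}$ and its derivatives) that for each $k$ one may replace $\tilde p_{a+n-k}$ by $z^a f^{(k)}$ in a suitable generating set, and since the remaining generators together with $z$ form a regular sequence, this yields $I:z^{ka}$ for all $k=0,\ldots,n$ and hence the whole chain $(I:z^i)+(z)$. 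That is what pins down the exact $n+1$ jumps uniformly, independent of the relation between $a$ and $n$. Your Hilbert-series plan for identifying $U_j$ is then unnecessary once the chain $\mathfrak a_0\subsetneq\cdots\subsetneq\mathfrak a_{n+1}$ is known, because the quotients can be computed directly as $\mathfrak a_{n+2-j}/\mathfrak a_{n+1-j}\cong \tilde R/(\mathfrak a_{n+1-j}:e_{j-1})$ using a short colon-ideal lemma (Lemma~\ref{lem:colon}).
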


We need some preparations for the proof of Theorem~\ref{Th3-1}.

\begin{remark}[Newton's identity]\label{Rem3-2} 
We recall Newton's identity: 
$$
ke_k=-\sum_{i=0}^{k-1} e_ip_{k-i} 
$$
for all $k\geq 1$, 
where we define $e_j=0$ if $j>n$ (see page 2 in \cite{S}). 
In particular, the following holds: 
$$
\sum_{i=0}^n e_ip_{m-i} = 0 
$$  
for all $m\geq n$. 
\end{remark}

\begin{remark}\label{Rem3-3} \;
\begin{itemize} 
\item[(1)] 
With the same notation as Theorem~\ref{Th3-1}, 
consider the ideals $\{J_j\}$ as follows: 
$$
\begin{array}{rcl}
J_1 &=& (e_1, e_2, \ldots, e_n), \\
J_j &=& (p_{a-1}, p_a, \ldots, p_{a+j-3}, e_j, e_{j+1},  \ldots, e_n) \ \ (j=2,3,\ldots,n), \\
J_{n+1} &=& (p_{a-1}, p_a, \ldots, p_{a+n-2}). \\
\end{array}
$$
We notice that these are the ideals describing $n+1$ central simple modules of $(A, \overline{z})$. 
Then, the following inclusions hold: 
$$
J_{n+1}\subset J_{n}\subset \cdots\subset J_{j+1}\subset J_j \subset\cdots\subset J_1. 
$$
Indeed, substituting $k=a+j-2$ in Newton's identity, 
it follows that 
$$
\begin{array}{rcl} 
(a+j-2)e_{a+j-2} &=&\displaystyle -\sum_{i=0}^{a+j-3}e_ip_{a+j-2-i} \\
 &=&\displaystyle -\{\sum_{i=0}^{j-1}e_ip_{a+j-2-i}+\sum_{i=j}^{a+j-3}e_ip_{a+j-2-i} \}. 
\end{array}
$$
Hence 
$$
e_0p_{a+j-2}+e_1p_{a+j-3}+\cdots +e_{j-1}p_{a-1} \in (e_j, e_{j+1},\ldots,e_n). 
$$ 
Since $e_0=1$, 
this implies that $p_{a+j-2}\in J_j$. 
Hence $J_{j+1} \subset J_j$. 
On the other hand, it is well known that $n$ power sums of consecutive degrees,  
$$
\{p_{a-1}, p_a, \ldots, p_{a+n-2}\}, 
$$
is a regular sequence in $R$ (see Lemma 7.2 in \cite{HW2} and Proposition 2.9 in \cite{CKW})). 
Hence, every set 
$$
\{p_{a-1}, p_a, \ldots, p_{a+j-3}, e_j, e_{j+1},  \ldots, e_n\}
$$
is also a regular sequence in $R$ for $j=2,3,\ldots,n$.  
In other word, every ideal $J_j$ is a complete intersection. 

\item[(2)] 
We keep the same notation as Theorem~\ref{Th3-1}. 
Then we can easily check that 
$$
\frac{(0:\overline{z}^{j+1})+(\overline{z})}{(0:\overline{z}^j)+(\overline{z})} 
\cong \frac{(I:z^{j+1})+(z)}{(I:z^j)+(z)}
$$
as $\tilde{R}$-modules for $j=0,1,2,\ldots$. 
Hence, to observe the central simple modules of $(A,\overline{z})$, we would like to discuss the behavior of ideals 
$\{(I:z^j) \mid j=0,1,2,\ldots\}$.
\end{itemize}
\end{remark}

\begin{notation} \label{def of f} 
Let 
$$
f^{(0)}=f^{(0)}(z)=e_0z^n + e_1z^{n-1} + e_2z^{n-2} + \cdots + e_{n-1}z + e_n \in \tilde{R} 
$$
and let
$$
f^{(k)}=f^{(k)}(z)=\frac{\partial^k}{\partial z^k}f^{(0)}(z)
$$
be the $k$-times partial derivation of $f^{(0)}$ 
for $k=1,2,\ldots,n$, i.e., \\
$$ 
f^{(1)}=f^{(1)}(z)=\frac{\partial}{\partial z}f^{(0)}(z)=ne_0z^{n-1}+(n-1)e_1z^{n-2}+\cdots+2e_{n-2}z+e_{n-1}, \\[2ex] 
$$
\\[-35pt]
\begin{eqnarray*}
f^{(k)}=f^{(k)}(z)=\frac{\partial}{\partial z}f^{(k-1)}(z)
&=&\{n(n-1)\cdots(n-k+1)\}e_0z^{n-k} \\[1ex ]
& & \ \ \ +\{(n-1)(n-2)\cdots(n-k)\}e_1z^{n-k-1} \\[1ex] 
& & \ \ \ \ \ +\cdots+\{k(k-1)\cdots1\}e_{n-k}. 
\end{eqnarray*}
\end{notation}

Following is a key to the proof of Theorem~\ref{Th3-1}, and we can calculate the left child and the right child of our binary tree of complete intersections by using this proposition.

\begin{proposition}\label{Prop3-5}
Fix an integer $a\geq 2$. 
With the same notation as above, 
define ideals $I_0, I_1, \ldots, I_{n}$ of $\tilde{R}$ as follows: 
$$ 
\begin{array}{rcl} 
I_0 & = & (\tilde{p}_a, \tilde{p}_{a+1}, \ldots, \tilde{p}_{a+n}), \\
I_1 & = & (\tilde{p}_a, \tilde{p}_{a+1}, \ldots, \tilde{p}_{a+n-1}, f^{(0)}), \\
I_2 & = & (\tilde{p}_a, \tilde{p}_{a+1}, \ldots, \tilde{p}_{a+n-2}, f^{(1)}, f^{(0)}), \\
I_k & = & (\tilde{p}_a, \tilde{p}_{a+1}, \ldots, \tilde{p}_{a+n-k}, f^{(k-1)}, f^{(k-2)}, \ldots, f^{(1)}, f^{(0)}) \\
\end{array}
$$
for $k=3,4,\ldots,n$.  
Then, for $k=0,1,2,\ldots,n$, 
the element $z^af^{(k)}$ can replace the element $\tilde{p}_{a+n-k}$ 
in the ideal $I_k$ as a member of the generating set, 
i.e., 
$$
\begin{array}{rcl}
I_0 &=&  (\tilde{p}_a, \tilde{p}_{a+1}, \ldots, \tilde{p}_{a+n-1}, z^af^{(0)}), \\ 
I_1 &=&  (\tilde{p}_a, \tilde{p}_{a+1}, \ldots, \tilde{p}_{a+n-2}, z^af^{(1)},f^{(0)}), \\ 
I_2 &=&  (\tilde{p}_a, \tilde{p}_{a+1}, \ldots, \tilde{p}_{a+n-3}, z^af^{(2)},f^{(1)},f^{(0)}), \\ 
I_k &=&  (\tilde{p}_a, \tilde{p}_{a+1}, \ldots, \tilde{p}_{a+n-k-1}, z^af^{(k)}, f^{(k-1)}, f^{(k-2)}, \ldots, f^{(1)}, f^{(0)}) \\
\end{array}
$$ 
for $k=3,4,\ldots,n-1$, and $I_n=(z^af^{(n)}, f^{(n-1)}, f^{(n-2)}, \ldots, f^{(1)}, f^{(0)})$. 
\end{proposition}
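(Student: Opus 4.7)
The plan is to induct on $k$ from $0$ to $n$.

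For the base case $k=0$, Newton's identity $\sum_{i=0}^n e_i p_{a+n-i}=0$ (valid since $a+n\ge n$) combines with the tautology $\sum_{i=0}^n e_i z^{a+n-i}=z^a f^{(0)}(z)$ to yield $z^a f^{(0)} = \sum_{i=0}^n e_i \tilde p_{a+n-i}$. Since $e_0=1$, this single identity expresses $\tilde p_{a+n}$ in terms of $z^a f^{(0)}$ together with $\tilde p_a,\ldots,\tilde p_{a+n-1}$, and conversely, so $I_0$ admits both presentations.

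For the inductive step, writing $f^{(k)}(z)=\sum_{i=0}^{n-k}c_{i,k}\,e_i\,z^{n-k-i}$ with $c_{i,k}:=(n-i)!/(n-i-k)!$ and multiplying by $z^a$, every exponent $a+n-k-i$ lies in $[a,a+n-k]$, so the substitution $z^{a+n-k-i}=\tilde p_{a+n-k-i}-p_{a+n-k-i}$ produces
\[ z^a f^{(k)}(z) \;=\; \sum_{i=0}^{n-k} c_{i,k}\,e_i\,\tilde p_{a+n-k-i} \;-\; T_k, \qquad T_k \;:=\; \sum_{i=0}^{n-k} c_{i,k}\,e_i\,p_{a+n-k-i} \;\in\; R. \]
The first sum lies in $(\tilde p_a,\ldots,\tilde p_{a+n-k})\subset I_k$, and since $c_{0,k}=n!/(n-k)!\ne 0$ the identity also solves for $\tilde p_{a+n-k}$. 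Hence both memberships $z^a f^{(k)}\in I_k$ and $\tilde p_{a+n-k}\in I_k'$ reduce to the single statement that the $R$-element $T_k$ lies in the common subideal $J_k:=(\tilde p_a,\ldots,\tilde p_{a+n-k-1},f^{(0)},\ldots,f^{(k-1)})$.

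To show $T_k\in J_k$ I would exploit the inductive hypothesis, which at level $k-1$ gives the identity $z^a f^{(k-1)} = \sum_{i=0}^{n-k+1} c_{i,k-1}\,e_i\,\tilde p_{a+n-k+1-i} - T_{k-1}$ together with the equality of the two presentations of $I_{k-1}$; in particular $T_{k-1}\in I_{k-1}\subset I_k$, the inclusion holding because the alternative presentation of $I_{k-1}$ contains $z^a f^{(k-1)}\in(f^{(k-1)})\subset I_k$. Combining this with the auxiliary identities $f^{(k-1)}(0)=(k-1)!\,e_{n-k+1}$, the factorization $f^{(k-1)}(z)-(k-1)!\,e_{n-k+1}=z\sum_{i=0}^{n-k}c_{i,k-1}\,e_i\,z^{n-k-i}$, and the combinatorial relation $c_{i,k}=(n-k+1-i)c_{i,k-1}$, one can rewrite $T_k$ as an explicit $\tilde R$-linear combination of the generators of $J_k$ together with $T_{k-1}$ and a residue that the two equal presentations of $I_{k-1}$ permit us to absorb into $J_k$. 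The main obstacle will be precisely this combinatorial bookkeeping: one must verify that after all substitutions every contribution outside $J_k$—in particular every $\tilde p_{a+n-k}$ and every stray $z^{a-1}$ term—cancels exactly, leaving $T_k$ as a genuine $J_k$-combination and completing the induction.
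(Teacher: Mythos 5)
Your decomposition $z^a f^{(k)} = \sum_{i=0}^{n-k} c_{i,k}\,e_i\,\tilde p_{a+n-k-i} - T_k$ is a correct identity, but the reduction you draw from it is wrong: the proposition does \emph{not} reduce to $T_k \in J_k$. From the identity one only gets that $z^a f^{(k)} \in I_k$ is equivalent to $T_k \in I_k$, and that $\tilde p_{a+n-k} \in I_k'$ is equivalent to $T_k \in I_k'$ (where $I_k'$ is the claimed alternative presentation); $T_k \in J_k$ would be \emph{sufficient} for both, but it is in general \emph{false}. Already for $n=1$, $a=2$, $k=1$ one has $T_1 = c_{0,1}e_0 p_2 = p_2 = x_1^2$, while $J_1 = (f^{(0)}) = (z - x_1)$, and clearly $x_1^2 \notin (z-x_1)$; nevertheless the proposition holds, since $I_1=(\tilde p_2, z-x_1)=(x_1^2+z^2, z-x_1)=(z^2, z-x_1)=(z^2 f^{(1)}, f^{(0)})$. (A two-variable counterexample: for $n=2$, $a=2$, $k=1$, one finds $T_1=(x_1+x_2)(x_1-x_2)^2$, and a direct degree-$3$ linear-algebra check shows $T_1 \notin (\tilde p_2, f^{(0)})$.) So the core claim your induction is supposed to establish is false, and the subsequent ``combinatorial bookkeeping'' plan cannot close the gap because it is aiming at the wrong target.

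What goes wrong, structurally, is that you chose the \emph{naive} splitting of $z^a f^{(k)}$ obtained by substituting $z^j = \tilde p_j - p_j$ termwise. This puts the leading coefficient $c_{0,k} = n!/(n-k)!$ in front of $\tilde p_{a+n-k}$ and dumps the rest into $T_k$, which need not lie in $J_k$. The paper's proof produces a \emph{different} expression of the form $\tfrac{k+1}{k}\,z^a f^{(k)} = 1\cdot\tilde p_{a+n-k} + (\text{an element of } J_k)$, where the nontrivial scalar arises from a self-referential cancellation: one writes $z^a f^{(1)} = f^{(0)}\phi + \mathbf{e}\mathbf{P}\mathbf{u}$ via the generating function $f^{(1)}/f^{(0)} = z^{-1}\sum_{i\ge 0} p_i z^{-i}$, then substitutes $\mathbf{P} = \tilde{\mathbf{P}} - z^a\mathbf{Z}$, and the term $-z^a\mathbf{e}\mathbf{Z}\mathbf{u} = -z^a f^{(1)}$ moves to the left to give $2z^a f^{(1)} = f^{(0)}\phi + \mathbf{e}\tilde{\mathbf{P}}\mathbf{u}$; the cases $k\ge 2$ are obtained by differentiating this identity and invoking Lemma~\ref{Lem3-6} for the correction term $\mathbf{e}\mathbf{Z}\mathbf{u}^{(k)} = \tfrac{1}{k+1}f^{(k+1)}$. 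This mechanism is not visible from the termwise $z^j = \tilde p_j - p_j$ substitution, and it is precisely what makes the remainder land in $J_k$. If you want to salvage a direct approach, you would need to rebalance your decomposition so that $\tilde p_{a+n-k}$ appears with coefficient that matches the paper's, and then show the resulting remainder (which is not your $T_k$) lies in $J_k$; as it stands, the inductive step is built on a false lemma.
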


\begin{proof} 
We treat the three cases for $k=0,1,2$ separately. 
As one will see the proof for $k=2$ works for all other cases. 

Let $k=0$. 
We show that 
$$
\tilde{p}_{a+n}-z^af^{(0)} \in (\tilde{p}_a, \tilde{p}_{a+1}, \ldots, \tilde{p}_{a+n-1}). 
$$
Substituting $m=a+n$ in Remark~\ref{Rem3-2}\;(Newton's identity), we get
$$
\sum_{i=0}^n e_ip_{a+n-i}=0. 
$$
In this equality replace $p_{a+n-i}$ by $\tilde{p}_{a+n-i}-z^{a+n-i}$ for $i=0,1,\ldots,n$. 
Then we have 
$$
\sum_{i=0}^n e_i\tilde{p}_{a+n-i}=z^a\sum_{i=0}^ne_iz^{n-i}=z^af^{(0)}. 
$$
Hence, since $e_0=1$, 
it follows that 
$$
\tilde{p}_{a+n}-z^af^{(0)}=-\sum_{i=1}^n e_i\tilde{p}_{a+n-i}. 
$$
This proves the assertion for the case $k=0$. 

Next we prove the assertion for the case $k=1$. 
We show that 
$$
\tilde{p}_{a+n-1}-z^af^{(1)} \in (\tilde{p}_a, \tilde{p}_{a+1}, \ldots, \tilde{p}_{a+n-2}, f^{(0)}). 
$$
Since 
$$
f^{(0)}=\prod_{i=1}^n(z-x_i), f^{(1)}=\sum_{i=1}^n\left(\prod_{j\neq i}(z-x_j)\right), 
$$
it follows that 

$$
\begin{array}{rcl}
\displaystyle \frac{f^{(1)}}{f^{(0)}} & = & \displaystyle \sum_{i=1}^n\frac{1}{z-x_i} 
\ = \ \frac{1}{z}\sum_{i=1}^n\frac{1}{1-\frac{x_i}{z}} \\[2ex]  
& = & \displaystyle \frac{1}{z}\sum_{i=1}^n \left(1+\frac{x_i}{z}+\frac{{x_i}^2}{z^2}+\frac{{x_i}^3}{z^3}+\cdots \right) 
\ = \ \displaystyle \frac{1}{z}\sum_{i\geq 0}p_iz^{-i}. 
\end{array}
$$
Hence we have 
$$
z^af^{(1)} = f^{(0)}\sum_{i\geq 0}p_iz^{a-1-i} 
= f^{(0)}\left(\sum_{i=0}^{a-1}p_iz^{a-1-i}\right) +f^{(0)}\left(\sum_{i\geq a}p_iz^{a-1-i}\right).  
$$
Furthermore, by Newton's identity, 
the second term of the right hand side of this equality is actually a finite sum as follows: 
\begin{eqnarray*}
&&\displaystyle f^{(0)}\left(\sum_{i\geq a}p_iz^{a-1-i}\right)  = 
\displaystyle (e_0z^n + e_1z^{n-1} + \cdots + e_{n-1}z + e_n)\sum_{i\geq a}p_iz^{a-1-i} \\[1ex] 
& =& \displaystyle \left\{\sum_{j=0}^{n-1}\left(\sum_{s+i=a+n-1-j, \ i\geq a}e_sp_i\right)z^j \right\}
+ \displaystyle \left\{\sum_{j\geq 1}\left(\sum_{s=0}^ne_sp_{t(j,s)}\right)z^{-j} \right\}   \\[1ex]
&= &\displaystyle \left\{\sum_{j=0}^{n-1}\left(\sum_{s+i=a+n-1-j, \ i\geq a}e_sp_i\right)z^j \right\},  
\end{eqnarray*}
where $t(j,s)=a+n-1+j-s$. 
Hence, 
the second term can be expressed by the product of matrices as follows: 
$$
f^{(0)}\left(\sum_{i\geq a}p_iz^{a-1-i}\right)  = {\bf e}{\bf P}{\bf u}, 
$$
where {\bf e}, {\bf P} and {\bf u} are three matrices of the following forms 
$$
{\bf e}=[e_{n-1},  e_{n-2}, \ldots, e_1, e_0], 
{\bf P}=
\left[
\begin{array}{ccccccc}
p_a          &      &           & \bigzerou  \\
p_{a+1}     & p_a &           &            \\
\vdots     & \ddots     & \ddots &           \\
p_{a+n-1}  & \cdots      &  p_{a+1}         & p_a   \\     
\end{array}
\right], 
{\bf u}=
\left[
\begin{array}{c}
1          \\
z    \\
\vdots     \\
z^{n-1}  \\     
\end{array}
\right].
$$
Thus, we get the equality 
$$
z^af^{(1)}=f^{(0)}\phi+{\bf e}{\bf P}{\bf u}, 
$$
where $\phi=\sum_{i=0}^{a-1}p_iz^{a-1-i} \in \tilde{R}$. 
(This equality is needed in the proof of Proposition~\ref{Prop4-1} again.)
Put 
$$
\tilde{{\bf P}}=
\left[
\begin{array}{ccccccc}
\tilde{p}_a          &      &           & \bigzerou  \\
\tilde{p}_{a+1}     & \tilde{p}_a &           &            \\
\vdots     & \ddots     & \ddots &           \\
\tilde{p}_{a+n-1}  & \cdots      &  \tilde{p}_{a+1}         & \tilde{p}_a   \\     
\end{array}
\right], 
{\bf Z}=
\left[
\begin{array}{ccccccc}
1          &      &           & \bigzerou  \\
z     & 1 &           &            \\
\vdots     & \ddots     & \ddots &           \\
z^{n-1}  & \cdots      &  z        & 1   \\     
\end{array}
\right]. 
$$
Then, since ${\bf P}=\tilde{{\bf P}}-z^a{\bf Z}$, 
it follows that 
$$
z^af^{(1)}=f^{(0)}\phi+{\bf e}\tilde{{\bf P}}{\bf u}-z^a{\bf e}{\bf Z}{\bf u}. 
$$
Hence, noting that ${\bf e}{\bf Z}{\bf u}=f^{(1)}$, 
we have 
$$
2z^af^{(1)}=f^{(0)}\phi+{\bf e}\tilde{{\bf P}}{\bf u}. 
$$
This shows that $2z^af^{(1)}$ is a linear combination of 
$$
f^{(0)}, \tilde{p}_a, \tilde{p}_{a+1}, \ldots, \tilde{p}_{a+n-1}
$$
with coefficients in $\tilde{R}$. 
One sees that the coefficient of $\tilde{p}_{a+n-1}$ in the linear combination is $e_0=1$. 
This proves the assertion for the case $k=1$. 

We proceed to a proof for the case $k=2$. 
We show that 
$$
\tilde{p}_{a+n-2}-z^af^{(2)} \in (\tilde{p}_a, \tilde{p}_{a+1}, \ldots, \tilde{p}_{a+n-3}, f^{(1)}, f^{(0)}). 
$$
Differentiating both sides of the equation
$$
z^af^{(1)}=f^{(0)}\phi+{\bf e}{\bf P}{\bf u}, 
$$
we have 
$$
z^af^{(2)}+az^{a-1}f^{(1)}=f^{(1)}\phi+f^{(0)}\phi^{(1)}+{\bf e}{\bf P}{\bf u}^{(1)}, 
$$
where ${\bf u}^{(1)}$ is the transpose of the vector $[0, 1, 2z, 3z^2, \cdots, (n-1)z^{n-2}]$. 
Hence, substituting ${\bf P}=\tilde{{\bf P}}-z^a{\bf Z}$, 
we have 
$$
z^af^{(2)}+az^{a-1}f^{(1)}=f^{(1)}\phi+f^{(0)}\phi^{(1)}+{\bf e}\tilde{{\bf P}}{\bf u}^{(1)}-z^a{\bf e}{\bf Z}{\bf u}^{(1)}.  
$$
Furthermore, a straightforward calculation leads to the following equalities: 
$$
{\bf e}{\bf Z}{\bf u}^{(1)} = \frac{1}{2}f^{(2)}. 
$$
Thus we get the equality 
$$
\frac{3}{2}z^af^{(2)}=(\phi-az^{a-1})f^{(1)}+\phi^{(1)}f^{(0)}+{\bf e}\tilde{{\bf P}}{\bf u}^{(1)}.  
$$
This shows that $\frac{3}{2}z^af^{(2)}$ is a linear combination of 
$$
f^{(1)}, f^{(0)}, \tilde{p}_a, \tilde{p}_{a+1}, \ldots, \tilde{p}_{a+n-2}
$$
with coefficients in $\tilde{R}$. 
One sees that the coefficient of $\tilde{p}_{a+n-2}$ in the linear combination is $e_0=1$. 
This proves the assertion for the case $k=2$. 

Proof for $k>2$ is the same as the case $k=2$ with the obvious modification, 
except that we use the equalities stated in Lemma~\ref{Lem3-6} instead of the equality ${\bf e}{\bf Z}{\bf u}^{(1)} = \frac{1}{2}f^{(2)}$. 
\end{proof}

\begin{lemma}\label{Lem3-6}
With the same notation as the proof of Proposition~\ref{Prop3-5}, 
let ${\bf u}^{(k)}$ be the $n$ dimensional column vector as follows: 
$$
{\bf u}^{(k)} 
={}^{t}\!\left[\frac{\partial^k}{\partial z^k} 1, \frac{\partial^k}{\partial z^k} z,  \frac{\partial^k}{\partial z^k} z^2, 
\ldots, \frac{\partial^k}{\partial z^k} z^{n-1} \right], 
$$
where we denote the transpose of a vector by ${}^{t}[\ ]$. 
Then the following holds: 
$$
{\bf e}{\bf Z}\{{\bf u}^{(k)}\} = \frac{1}{k+1}f^{(k+1)}
$$
for $k=2,3,\ldots,n-1$. 
\end{lemma}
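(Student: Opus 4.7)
The plan is to carry out a direct computation of both sides and compare. The only nontrivial ingredient will be the hockey stick identity $\sum_{j=k}^{i}\binom{j}{k}=\binom{i+1}{k+1}$.

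First, I would make the entries of $\mathbf{u}^{(k)}$ explicit: the $j$-th entry (indexed by $j=0,1,\ldots,n-1$) is $\frac{j!}{(j-k)!}\, z^{j-k}$ when $j\ge k$ and $0$ when $j<k$. Since the $(i,j)$-entry of $\mathbf{Z}$ is $z^{i-j}$ for $0\le j\le i$ and $0$ otherwise, I would compute the $i$-th entry of $\mathbf{Z}\mathbf{u}^{(k)}$ as
\[
\sum_{j=k}^{i} z^{i-j}\cdot \frac{j!}{(j-k)!}\, z^{j-k}\;=\;k!\,\Biggl(\sum_{j=k}^{i}\binom{j}{k}\Biggr) z^{i-k}\;=\;\frac{(i+1)!}{(k+1)(i-k)!}\, z^{i-k}
\]
for $i\ge k$ (and $0$ for $i<k$), the last equality being the hockey stick identity.

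Next, I would multiply on the left by $\mathbf{e}=[e_{n-1},e_{n-2},\ldots,e_0]$ and reindex the result by $m=n-1-i$, obtaining
\[
\mathbf{e}\mathbf{Z}\mathbf{u}^{(k)}\;=\;\frac{1}{k+1}\sum_{m=0}^{n-1-k} e_m\,\frac{(n-m)!}{(n-m-k-1)!}\, z^{n-m-k-1}.
\]
Differentiating $f^{(0)}=\sum_{m=0}^{n} e_m z^{n-m}$ exactly $k+1$ times gives $f^{(k+1)}=\sum_{m=0}^{n-k-1} e_m\,\frac{(n-m)!}{(n-m-k-1)!}\, z^{n-m-k-1}$, so the two expressions agree up to the factor $\frac{1}{k+1}$, which proves the lemma.

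I do not anticipate any genuine obstacle beyond careful index bookkeeping. As a sanity check, the same formula specialized to $k=0$ and $k=1$ recovers $\mathbf{e}\mathbf{Z}\mathbf{u}=f^{(1)}$ and $\mathbf{e}\mathbf{Z}\mathbf{u}^{(1)}=\tfrac{1}{2}f^{(2)}$, both of which were used earlier in the proof of Proposition~\ref{Prop3-5}, so the structure of the argument is already present in germinal form in that proof.
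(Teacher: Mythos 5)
Your proof is correct and takes essentially the same approach as the paper: a componentwise computation in which the key combinatorial step, the hockey-stick identity $\sum_{j=k}^{i}\binom{j}{k}=\binom{i+1}{k+1}$, is exactly the telescoping falling-factorial identity $(k+1)\sum_{j=1}^{i}(j-1)^{\underline{k}}=i^{\underline{k+1}}$ that the paper uses. The only organizational difference is that the paper first proves the vector identity $\frac{\partial^k}{\partial z^k}({\bf Z}{\bf u})=(k+1){\bf Z}{\bf u}^{(k)}$ and then applies ${\bf e}$ and the formula $f^{(k+1)}={\bf e}\,\frac{\partial^k}{\partial z^k}({\bf Z}{\bf u})$, whereas you compute ${\bf e}{\bf Z}{\bf u}^{(k)}$ in closed form and compare it term by term with the explicit expansion of $f^{(k+1)}$.
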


\begin{proof}
Since 
$
f^{(k+1)}=\frac{\partial^k}{\partial z^k} f^{(1)}
           =\frac{\partial^k}{\partial z^k} ({\bf e}{\bf Z}{\bf u})
           ={\bf e}\frac{\partial^k}{\partial z^k}({\bf Z}{\bf u}),
$
it is enough to show that
$$
\frac{\partial^k}{\partial z^k}({\bf Z}{\bf u})=(k+1){\bf Z}\{{\bf u}^{(k)}\}.
$$
Noting that 
$ 
{\bf Z}{\bf u}
  ={}^{t}\! \left[ \frac{\partial}{\partial z} z, \frac{\partial}{\partial z} z^2, 
                   \ldots, 
                   \frac{\partial}{\partial z} z^{n} 
                   \right]
$, we have
$$
\left[\textrm{the}\;
i\textrm{-th component of }
\frac{\partial^k}{\partial z^k}({\bf Z}{\bf u})
\right]
=\frac{\partial^{k+1}}{\partial z^{k+1}}(z^i)
=i^{\underline{k+1}}z^{i-k-1},
$$
where we denote 
$i^{\underline{k+1}}=i(i-1)\cdots(i-k)$
the falling factorial. 
On the other hand,
\begin{eqnarray*}
\left[\textrm{the}\;
i\textrm{-th component of }
(k+1){\bf Z}\{{\bf u}^{(k))}\}
\right] 
&=&(k+1)\sum_{j=1}^{i}z^{i-j}\frac{\partial^{k}}{\partial z^{k}}(z^{j-1}) \\
=(k+1)\sum_{j=1}^{i}z^{i-j}(j-1)^{\underline{k}}z^{j-k-1} 
&=&\left((k+1)\sum_{j=1}^{i}(j-1)^{\underline{k}}\right)z^{i-k-1} \\
&=&i^{\underline{k+1}}z^{i-k-1}.
\end{eqnarray*}
Hence the assertion follows. In the last equality of the above equation, we use the following identity
$$
(k+1)\sum_{j=1}^{i}(j-1)^{\underline{k}}
=\sum_{j=1}^i (j^{\underline{k+1}}-(j-1)^{\underline{k+1}})
=i^{\underline{k+1}}\; .
$$
\end{proof}

\begin{lemma}\label{KeyLem1} 
With the same notation as Theorem~\ref{Th3-1}, 
define the ideals $\mathfrak{a}_0, \mathfrak{a}_1, \mathfrak{a}_2, \ldots, \mathfrak{a}_{n+1}$ of $\tilde{R}$ as follows: 
$$
\begin{array}{rcl}
\mathfrak{a}_0 &=& (p_a,p_{a+1},\ldots,p_{a+n-1},z),  \\
\mathfrak{a}_1 &=& (p_a,p_{a+1},\ldots,p_{a+n-2},e_n,z),  \\
\mathfrak{a}_2 &=& (p_a,p_{a+1},\ldots,p_{a+n-3},e_{n-1},e_n,z),  \\
\mathfrak{a}_k &=& (p_a,p_{a+1},\ldots,p_{a+n-k-1},e_{n+1-k},e_{n+2-k},\ldots,e_n,z)
\end{array}
$$ 
for $k=3,4,\ldots,n-1$, $\mathfrak{a}_{n} = (e_1,e_2,\ldots,e_n,z)$ and $\mathfrak{a}_{n+1}=\tilde{R}$. 

\begin{itemize}
\item[(1)] For $k=0,1,2,\ldots,n$, 
we have the following equalities: 
$$
(I:z^i)+(z)=\mathfrak{a}_k
$$
for $i=ka, ka+1, \ldots, (k+1)a-1$, 
and $(I:z^{(n+1)a})+(z)=\mathfrak{a}_{n+1}$.  
\item[(2)] 
The following inclusions hold: 
$$
\mathfrak{a}_0 \subsetneq \mathfrak{a}_1 \subsetneq \mathfrak{a}_2 
\subsetneq \cdots \subsetneq \mathfrak{a}_{n} \subsetneq \mathfrak{a}_{n+1}.
$$
\end{itemize}
\end{lemma}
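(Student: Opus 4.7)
Set $L_{j}:=(I:z^{j})+(z)$. I will sandwich $L_{ka}$ between two copies of $\mathfrak{a}_{k}$: the lower inclusion comes from Proposition~\ref{Prop3-5}, and the upper from an iterative $z$-deflation argument. Together with the monotonicity $L_{ka}\subseteq L_{ka+1}\subseteq\cdots\subseteq L_{(k+1)a-1}$ this forces $L_{j}=\mathfrak{a}_{k}$ throughout each interval, giving~(1). Part~(2) then follows from a dimension comparison of the complete intersections $\mathfrak{a}_{k}$.

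\textbf{Step 1 (containment $\mathfrak{a}_{k}\subseteq L_{ka}$).} Induction on $k$ using the second generating set of $I_{k}$ from Proposition~\ref{Prop3-5} yields $I_{k}\subseteq I:z^{ka}$: from $z^{a}f^{(k)}\in I_{k}$ we get $f^{(k)}\in I_{k}:z^{a}\subseteq I:z^{(k+1)a}$, and $I_{k+1}=I_{k}+(f^{(k)})$. The case $k=n+1$ is trivial since $f^{(n)}=n!$ is invertible. Reducing the first generating set of $I_{k}$ modulo $z$ (using $\tilde{p}_{j}\equiv p_{j}$ and $f^{(l)}\equiv l!\,e_{n-l}\pmod z$) produces $(p_{a},\ldots,p_{a+n-k},e_{n-k+1},\ldots,e_{n},z)$, and Newton's identity applied to $p_{a+n-k}$ (separately in the cases $a+n-k\geq n$ and $a+n-k<n$) renders that generator redundant. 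Hence $I_{k}+(z)=\mathfrak{a}_{k}$ and $\mathfrak{a}_{k}\subseteq L_{ka}$.

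\textbf{Step 2 (reverse containment).} Given $g\in\tilde{R}$ with $z^{j}g\in I$ and $ka\leq j\leq(k+1)a-1$, I produce $g\in\mathfrak{a}_{k}$ by $j$ iterations of $z$-deflation, organized into $k$ macro-steps of $a$ iterations each (reducing $z^{j-ma}g\in I_{m}$ to $z^{j-(m+1)a}g\in I_{m+1}$ via the second form of $I_{m}$, for $m=0,1,\ldots,k-1$) followed by a final block of $r:=j-ka<a$ iterations inside $I_{k}$. Each iteration begins with an equation
\[z^{j-ma-s}g=\sum_{i=0}^{n-m-1}c_{i}\tilde{p}_{a+i}+d\cdot z^{a-s}f^{(m)}+\sum_{l=0}^{m-1}h_{l}f^{(l)}.\]
Extracting the $z$-constant term (which vanishes on the left and on the middle summand since $a-s\geq 1$) yields
\[\sum_{i=0}^{n-m-1}c_{i}(0)\,p_{a+i}+\sum_{l=0}^{m-1}l!\,h_{l}(0)\,e_{n-l}=0,\]
a syzygy of the sequence $(p_{a},\ldots,p_{a+n-m-1},e_{n-m+1},\ldots,e_{n})$. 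This sequence is regular in $R$ (the $a\mapsto a+1$ shift of the argument in Remark~\ref{Rem3-3}(1), since any $n$ consecutive power sums form a regular sequence), so the syzygy is a combination of Koszul syzygies. Each Koszul syzygy in $R$ lifts verbatim to a Koszul syzygy in $\tilde{R}$ on the sequence $(\tilde{p}_{a},\ldots,\tilde{p}_{a+n-m-1},f^{(m-1)},\ldots,f^{(0)})$, because $\tilde{p}_{a+i}(0)=p_{a+i}$ and $f^{(l)}(0)=l!\,e_{n-l}$; subtracting the lifts from $(c_{i},h_{l})$ preserves the displayed equation but forces every $c_{i}$ and $h_{l}$ into $(z)$. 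Dividing through by $z$ gives the equation for the next iteration. After all $j$ iterations we reach
\[g=\sum c_{i}^{(j)}\tilde{p}_{a+i}+d^{(j)}z^{(k+1)a-j}f^{(k)}+\sum_{l=0}^{k-1}h_{l}^{(j)}f^{(l)},\]
and since $(k+1)a-j\geq 1$, reducing modulo $z$ places $g$ inside $\mathfrak{a}_{k}$.

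\textbf{Step 3 (strictness and obstacle).} Each $\mathfrak{a}_{k}$ is a complete intersection in $\tilde{R}$ with generator degrees $a,a+1,\ldots,a+n-k-1,n-k+1,\ldots,n,1$, so
\[\dim_{K}\tilde{R}/\mathfrak{a}_{k}=a(a+1)\cdots(a+n-k-1)\cdot(n-k+1)\cdots n,\]
and the ratio $\dim_{K}(\tilde{R}/\mathfrak{a}_{k+1})/\dim_{K}(\tilde{R}/\mathfrak{a}_{k})=(n-k)/(a+n-k-1)<1$ for $a\geq 2$. Combined with the inclusion $\mathfrak{a}_{k}\subseteq\mathfrak{a}_{k+1}$ (which holds because Newton places $p_{a+n-k-1}\in\mathfrak{a}_{k+1}$), this forces strict inclusion for $0\leq k<n$; the top step $\mathfrak{a}_{n}\subsetneq\tilde{R}$ is immediate because $R/(e_{1},\ldots,e_{n})$ is the nonzero coinvariant algebra of $S_{n}$. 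The technical crux of the whole argument is Step~2: one must carefully track which generating set of $I_{m}$ is in use, the scaling factor $l!$ relating $f^{(l)}$ to $e_{n-l}$, and the fact that the Koszul lift from $R$ to $\tilde{R}$ preserves the total expression on both sides of the equation while controlling only its $z$-constant term.
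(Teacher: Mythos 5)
Your proof is correct and follows essentially the same route as the paper: both rest on Proposition~\ref{Prop3-5}, the identity $I_k+(z)=\mathfrak{a}_k$, and an iterated colon-by-$z$ computation driven by the regularity (modulo $z$) of the relevant sequences. The paper packages the colon step as a one-line consequence of $\{\tilde{p}_a,\ldots,\tilde{p}_{a+n-k-1},z,f^{(k-1)},\ldots,f^{(0)}\}$ being a regular sequence (so $z$ is a nonzerodivisor modulo the other generators, and $(g_1,\ldots,g_m,z^{a}h):z^{i}=(g_1,\ldots,g_m,z^{a-i}h)$), whereas you re-derive this from scratch via Koszul-syzygy lifting and $z$-deflation; that argument is sound but could be shortened by invoking the regular-sequence lemma directly, and your Step~1 could likewise avoid the Newton detour by using the \emph{second} generating set of $I_k$ from Proposition~\ref{Prop3-5}, under which $z^{a}f^{(k)}\equiv 0\bmod z$ and $I_k+(z)=\mathfrak{a}_k$ falls out immediately.
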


\begin{proof} 
Let $I_0, I_1,\ldots,I_n$ be the ideals defined in Proposition~\ref{Prop3-5}. 
Then it follows by Proposition~\ref{Prop3-5} that 
$$
I_k+(z)=\mathfrak{a}_k
$$
for $k=0,1,2,\ldots,n$. 
We divide the proof of (1) into three steps. 

{\em Step 1:} 
Since 
$$
(\tilde{p}_a, \tilde{p}_{a+1}, \ldots, \tilde{p}_{a+n-1}, z) = (p_a, p_{a+1}, \ldots, p_{a+n-1}, z), 
$$
it follows by Remark~\ref{Rem3-3} that  
$$
\{\tilde{p}_a, \tilde{p}_{a+1}, \ldots, \tilde{p}_{a+n-1}, z\}
$$
is also a regular sequence in $\tilde{R}$. 
Hence, using Proposition~\ref{Prop3-5}, 
one easily sees that 
$$
\begin{array}{rcl} 
I:z^i = I_0:z^i &=& (\tilde{p}_a, \tilde{p}_{a+1}, \ldots, \tilde{p}_{a+n-1}, z^{a}f^{(0)}):z^i  \\
        &=&  (\tilde{p}_a, \tilde{p}_{a+1}, \ldots, \tilde{p}_{a+n-1}, z^{a-i}f^{(0)})  \\
\end{array}
$$
for $i=0,1,2,\ldots,a-1$, and $I:z^a=I_1$. 
Thus $(I:z^i)+(z)=\mathfrak{a}_0$ for $i=0,1,2,\ldots,a-1$. 
This proves the assertion for the case $k=0$. 

{\em Step 2:} 
Since 
$$
(\tilde{p}_a, \tilde{p}_{a+1}, \ldots, \tilde{p}_{a+n-2}, z, f^{(0)}) = (p_a, p_{a+1}, \ldots, p_{a+n-2}, z, e_n), 
$$
it follows by Remark~\ref{Rem3-3} that  
$$
\{\tilde{p}_a, \tilde{p}_{a+1}, \ldots, \tilde{p}_{a+n-2}, z, f^{(0)}\}
$$
is also a regular sequence in $\tilde{R}$. 
Hence, using Proposition~\ref{Prop3-5}, 
one easily sees that 
$$
\begin{array}{rcl} 
I:z^{a+j} = I_1:z^j &=& (\tilde{p}_a, \tilde{p}_{a+1}, \ldots, \tilde{p}_{a+n-2}, z^{a}f^{(1)},f^{(0)}):z^j  \\
        &=&  (\tilde{p}_a, \tilde{p}_{a+1}, \ldots, \tilde{p}_{a+n-2}, z^{a-j}f^{(1)},f^{(0)})  \\
\end{array}
$$
for $j=0,1,2,\ldots,a-1$, and $I:z^{2a}=I_2$.   
Thus $(I:z^i)+(z)=\mathfrak{a}_1$ for $i=a,a+1,\ldots,2a-1$. 
This proves the assertion for the case $k=1$. 

{\em Step 3:} 
Let $k=2,3,\ldots,n$. 
In this step, we show that the equality
$$
(I:z^i)+(z)=\mathfrak{a}_k
$$
holds for $i=ka, ka+1, \ldots, (k+1)a-1$, 
assuming the equality $I:z^{ka}=I_k$. 
Noting that   
$$
\begin{array}{cl}
   & (\tilde{p}_a, \tilde{p}_{a+1}, \ldots, \tilde{p}_{a+n-k-1}, z, f^{(k-1)}, f^{(k-2)}, \ldots, f^{(1)}, f^{(0)}) \\
= & (p_a, p_{a+1}, \ldots, p_{a+n-k-1}, z, e_{n-k+1}, e_{n-k+2}, \ldots, e_{n-1}, e_n), 
\end{array}
$$
it follows by Remark~\ref{Rem3-3} that  
$$
\{\tilde{p}_a, \tilde{p}_{a+1}, \ldots, \tilde{p}_{a+n-k-1}, z, f^{(k-1)}, f^{(k-2)}, \ldots, f^{(1)}, f^{(0)}\}
$$
is also a regular sequence in $\tilde{R}$. 
Hence, using Proposition~\ref{Prop3-5}, 
one easily sees that 
$$
\begin{array}{rcl}
I:z^{ka+j}&=&I_k:z^j \\
&=&  (\tilde{p}_a, \tilde{p}_{a+1}, \ldots, \tilde{p}_{a+n-k-1}, z^{a}f^{(k)}, f^{(k-1)}, f^{(k-2)}, \ldots, f^{(1)}, f^{(0)}):z^j \\
&=&  (\tilde{p}_a, \tilde{p}_{a+1}, \ldots, \tilde{p}_{a+n-k-1}, z^{a-j}f^{(k)}, f^{(k-1)}, f^{(k-2)}, \ldots, f^{(1)}, f^{(0)}) \\
\end{array}
$$
for $j=0,1,2,\ldots,a-1$, and $I:z^{(k+1)a}=I_{k+1}$, where put $I_{n+1}=\tilde{R}$. 
Thus $(I:z^i)+(z)=\mathfrak{a}_k$ for $i=ka,ka+1,\ldots,(k+1)a-1$. 
This proves the assertion for the cases $k=2,3,\ldots,n$, by induction on $k$. 
Furthermore, one sees that 
$$
(I:z^{(n+1)a})+(z) = I_{n+1}+(z) = \mathfrak{a}_{n+1}. 
$$

(2) 
Since $(I:z^i)+(z) \subset (I:z^{i+1})+(z)$ for all $i\geq 0$, 
it follows by (1) that 
$$
\mathfrak{a}_0 \subset \mathfrak{a}_1 \subset \mathfrak{a}_2 
\subset \cdots \subset \mathfrak{a}_{n} \subset \mathfrak{a}_{n+1}. 
$$
Furthermore, since every ideal $\mathfrak{a}_k$ $(0\leq k\leq n)$ is generated by a regular sequence of length $n+1$, 
it follows by Remark~\ref{Rem:3-7} bellow that 
$$
\dim_K \tilde{R}/\mathfrak{a}_0 > \dim_K \tilde{R}/\mathfrak{a}_1 > 
\cdots > \dim_K \tilde{R}/\mathfrak{a}_n > \dim_K \tilde{R}/\mathfrak{a}_{n+1}=0
$$
as $K$-vector spaces. 
Hence we have
$$
\mathfrak{a}_0 \subsetneq \mathfrak{a}_1 \subsetneq \mathfrak{a}_2 
\subsetneq \cdots \subsetneq \mathfrak{a}_n \subsetneq \mathfrak{a}_{n+1}. 
$$
\end{proof}

\begin{remark}\label{Rem:3-7}
Let $\{f_1,f_2,\ldots,f_{n+1}\}$ be a regular sequence of homogeneous forms in $\tilde{R}$. 
Then one easily sees that 
$$
\dim_K \tilde{R}/(f_1,f_2,\ldots,f_{n+1}) = \prod_{i=1}^{n+1}\deg f_i
$$
as a $K$-vector space. 
Let $\{g_1,g_2,\ldots,g_{n}, g_{n+1}\}$ be a regular sequence of homogeneous forms in $\tilde{R}$. 
Assume that 
$$
\deg f_1 > \deg g_1, \deg f_i = \deg g_i \ (i=2,3,\ldots,n+1).  
$$
Then it follows that 
$$
\dim_K \tilde{R}/(f_1,f_2,\ldots,f_{n+1}) > \dim_K \tilde{R}/(g_1,g_2,\ldots,g_{n+1}).  
$$
\end{remark}

Now we can give a proof of the main result of this section.

\begin{proof}[Proof of Theorem \ref{Th3-1}]
(1) We note that $I=(\tilde{e}_1, \tilde{e}_2, \ldots, \tilde{e}_{n+1})$ and  $\tilde{e}_i ~(0\leq i\leq n+1)$ is defined as follows: 
$$
\prod_{i=1}^{n+1}(w-x_i)=\tilde{e}_0w^{n+1} + \tilde{e}_1w^n + \tilde{e}_2w^{n-1} + \cdots + \tilde{e}_{n}w + \tilde{e}_{n+1}, 
$$
where put $x_{n+1}=z$. 
Substituting $w=z$ in both sides of this equality, 
we have 
$$
\tilde{e}_0z^{n+1} + \tilde{e}_1z^n + \tilde{e}_2z^{n-1} + \cdots + \tilde{e}_{n}z + \tilde{e}_{n+1}=0.  
$$
Since $\tilde{e}_0=1$, 
it follows that $I=(\tilde{e}_1, \tilde{e}_2, \ldots, \tilde{e}_{n},z^{n+1})$. 
Furthermore, 
since 
$$
I:z^i=(\tilde{e}_1, \tilde{e}_2, \ldots, \tilde{e}_{n},z^{n+1-i})
$$ 
for $i=0,1,\ldots,n$ 
and $I^{n+1}=\tilde{R}$, 
it follows that 
$$
(I:z^i)+(z)=(e_1,e_2,\ldots,e_n,z)
$$ 
for $i=0,1,\ldots,n$ 
and $(I:z^{n+1})+(z)=\tilde{R}$. 
Thus 
$(A,\overline{z})$ has only one central simple module 
$$
U_1 \cong \tilde{R}/((\tilde{e}_1,\tilde{e}_2,\ldots,\tilde{e}_n,z) \cong R/(e_1,e_2,\ldots,e_n). 
$$

(2) Let $\mathfrak{a}_0, \mathfrak{a}_1, \ldots, \mathfrak{a}_{n+1}$ be the ideals defined in Lemma~\ref{KeyLem1}. 
Then it follows by Lemma~\ref{KeyLem1} that 
$$
U_j\cong \frac{\mathfrak{a}_{n+2-j}}{\mathfrak{a}_{n+1-j}}
$$
for $j=1,2,\ldots,n+1$. 
In particular, we have 
$$
U_1 \cong \frac{R}{(e_1,e_2,\ldots,e_n)}, 
$$ 
as desired. 
Next, for $j=2,3,\ldots,n+1$, 
a straightforward calculation leads to the following identities: 
$$
\frac{\mathfrak{a}_{n+2-j}}{\mathfrak{a}_{n+1-j}} \cong e_{j-1} \cdot \frac{\tilde{R}}{\mathfrak{a}_{n+1-j}} 
\cong \frac{\tilde{R}}{\mathfrak{a}_{n+1-j}:e_{j-1}}
$$
as $\tilde{R}$-modules. 
Hence, since  
$$
\begin{array}{rcl}
\mathfrak{a}_{n+1-j}:e_{j-1} &=& (p_a,p_{a+1},\ldots,p_{a+j-2},e_j,e_{j+1},\ldots,e_n,z):e_{j-1} \\
                                    &=& (p_{a-1},p_a,\ldots,p_{a+j-3},e_j,e_{j+1},\ldots,e_n,z) \\
\end{array}
$$ 
for $j=2,3,\ldots,n$, by Lemma~\ref{lem:colon} (1) stated bellow, 
we have 
$$
U_j \cong \frac{R}{(p_{a-1},p_a,\ldots,p_{a+j-3},e_j,e_{j+1},\ldots,e_n)}, 
$$
for $j=2,3,\ldots,n$, as desired. 
Furthermore, since 
$$
\begin{array}{rcl}
\mathfrak{a}_{0}:e_n &=& (p_a,p_{a+1},\ldots,p_{a+n-1},z):e_n \\
                           &=& (p_{a-1},p_a,\ldots,p_{a+n-2},z)\\
\end{array}
$$
by Lemma~\ref{lem:colon} (2), 
we have 
$$
U_{n+1} \cong \frac{R}{(p_{a-1},p_a,\ldots,p_{a+n-2})}, 
$$
as desired. 
\end{proof}

\begin{lemma}\label{lem:colon}
We use the same notation as Theorem~\ref{Th3-1}. 
\begin{itemize}
\item[(1)] 
Fix $s=0,1,2,\ldots,n-2$. 
Define the ideals $J$ and $J^\prime$ as follows: 
$$
\begin{array}{rcl}
J &=& (p_a,p_{a+1},\ldots,p_{a+s},e_{s+2},e_{s+3},\ldots,e_n,z), \\
J^\prime &=& (p_{a-1},p_a,\ldots,p_{a+s-1},e_{s+2},e_{s+3},\ldots,e_n,z). \\
\end{array}
$$ 
Then $J:e_{s+1}=J^\prime$. 
\item[(2)] 
The following holds: 
$$
(p_a,p_{a+1},\ldots,p_{a+n-1},z):e_n = (p_{a-1},p_a,\ldots,p_{a+n-2},z). 
$$ 
\end{itemize}
\end{lemma}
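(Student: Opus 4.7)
The plan is to reduce both equalities to colon computations in $R$, use a single application of Newton's identity to convert $I$ and $I'$ into principal ideals over a common subideal, and close with a regular-sequence argument. Write $I = (p_a,\dots,p_{a+s},e_{s+2},\dots,e_n)$ and $I' = (p_{a-1},p_a,\dots,p_{a+s-1},e_{s+2},\dots,e_n)$, viewed as ideals of $R$. Since $J = I\tilde R + (z)\tilde R$ and $e_{s+1}\in R$, a direct calculation (write $f = f_0 + zf_1$ with $f_0\in R$ and reduce modulo $z$) shows $J:e_{s+1} = (I:_R e_{s+1})\tilde R + (z)\tilde R$, and similarly $J' = I'\tilde R + (z)\tilde R$. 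So part~(1) reduces to the equality $I:_R e_{s+1} = I'$ in $R$, and part~(2) is literally the same assertion for $s = n-1$ (where the block $e_{s+2},\dots,e_n$ is empty). A single argument therefore handles both.

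Set $K = (p_a,\dots,p_{a+s-1},e_{s+2},\dots,e_n)\subset R$, the common part of $I$ and $I'$, so that $I = K + (p_{a+s})$ and $I' = K + (p_{a-1})$. Newton's identity at $k = a+s$ reads
\[
(a+s)e_{a+s} + p_{a+s} + \sum_{i=1}^{a+s-1} e_i p_{a+s-i} = 0.
\]
Since $a \geq 2$ forces $a+s \geq s+2$, the coefficient $e_{a+s}$ either lies in $(e_{s+2},\dots,e_n)\subset K$ or vanishes; the summands with $1\leq i\leq s$ contain a factor $p_{a+s-i}\in (p_a,\dots,p_{a+s-1})\subset K$; and those with $i\geq s+2$ contain a factor $e_i\in K$ or vanish. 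The only terms not manifestly in $K$ are $p_{a+s}$ and $e_{s+1}p_{a-1}$, so the identity collapses to
\[
p_{a+s}\equiv -e_{s+1}p_{a-1}\pmod{K}.
\]
Hence in $R/K$ the images of $I$ and $I'$ are the principal ideals generated by $\overline{e_{s+1}p_{a-1}}$ and $\overline{p_{a-1}}$ respectively.

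The main obstacle is showing that $e_{s+1}$ is a nonzerodivisor on $R/K$; once this holds, the elementary fact that $(K+(cq)):c = K+(q)$ whenever $c$ is a nonzerodivisor modulo $K$ immediately yields $I:_R e_{s+1} = K+(p_{a-1}) = I'$. To prove the nonzerodivisor claim, I show that the ideal $\mathfrak a = (p_a,\dots,p_{a+s-1},e_{s+1},e_{s+2},\dots,e_n)$ is $\mathfrak m$-primary in $R$; since it is generated by $n$ homogeneous elements in the Cohen--Macaulay ring $R$ of Krull dimension $n$, such a generating set is automatically a regular sequence, and any permutation of it remains regular. The zero locus of $(e_{s+1},\dots,e_n)$ in affine $n$-space consists of the coordinate-type subspaces in which at least $n-s$ of the $x_i$ vanish (read off from the factorization $\prod(z-x_i) = z^{n-s}(z^s + e_1 z^{s-1}+\cdots+e_s)$). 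On each such $s$-dimensional stratum the remaining conditions $p_a=\cdots=p_{a+s-1}=0$ are $s$ consecutive power sums in $s$ variables and, by Lemma~7.2 of \cite{HW2}, cut out only the origin. Thus the common zero locus is $\{0\}$, so $\mathfrak a$ is $\mathfrak m$-primary, $e_{s+1}$ is a nonzerodivisor on $R/K$, and the proof is complete.
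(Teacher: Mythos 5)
Your proof is correct, and it reaches the same conclusion by the same two ingredients as the paper --- Newton's identity and the regularity of the mixed sequence $\{p_a,\dots,p_{a+s-1},e_{s+1},\dots,e_n\}$ --- but it packages them more cleanly. Where the paper proves the two inclusions $J'\subset J:e_{s+1}$ and $J:e_{s+1}\subset J'$ separately (the second by expanding an arbitrary $f\in J:e_{s+1}$ and substituting $p_{a+s}$ via Newton), you isolate the structural fact $I = K + (p_{a+s}) = K + (e_{s+1}p_{a-1})$ and invoke the general identity $(K+(cq)):c = K+(q)$ for $c$ a nonzerodivisor on $R/K$, which delivers both directions at once. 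You also supply a self-contained geometric proof that $(p_a,\dots,p_{a+s-1},e_{s+1},\dots,e_n)$ is $\mathfrak m$-primary, by stratifying $V(e_{s+1},\dots,e_n)$ into $s$-dimensional coordinate subspaces and restricting the power sums; the paper simply points to Remark~\ref{Rem3-3}, whose justification in turn rests on Newton's identity plus the power-sum regular sequence fact from \cite{HW2}/\cite{CKW}. Your treatment of part~(2) as the $s=n-1$ instance of part~(1) with an empty $e$-block is also a small economy over the paper, which writes out that case separately. Two minor things worth noting explicitly if you were to write this up: the passage $J:e_{s+1} = (I:_R e_{s+1})\tilde R + (z)$ uses that $J\cap R = I$ (clear since $\tilde R/J\cong R/I$), and the geometric argument implicitly passes to the algebraic closure, which is harmless for a homogeneous ideal in characteristic zero.
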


\begin{proof} 
(1) 
Substituting $k=a+3$ in Newton's identity, we have 
$$
e_0p_{a+s} + e_1p_{a+s-1} + \cdots + e_sp_{a} + e_{s+1}p_{a-1} \in (e_{s+2}, e_{s+3}, \ldots, e_n). 
$$
Hence $e_{s+1}p_{a-1} \in J$, i.e., $p_{a-1} \in J:e_{s+1}$.
Thus $J:e_{s+1} \supset J^\prime$. 

Let $f\in J:e_{s+1}$. 
Since $e_{s+1}f \in J$, 
$e_{s+1}f$ can be expressed as 
$$
e_{s+1}f = f_1p_a+f_2p_{a+1}+\cdots +f_{s+1}p_{a+s}+f_{s+2}e_{s+2}+f_{s+3}e_{s+3}+\cdots +f_ne_n+f_{n+1}z~(\star)
$$
for some $f_j\in\tilde{R}$. 
Furthermore, since 
$$
e_0p_{a+s} + e_1p_{a+s-1} + \cdots + e_sp_{a} + e_{s+1}p_{a-1} \in (e_{s+2}, e_{s+3}, \ldots, e_n), 
$$
we get the following equality: 
$$
e_0p_{a+s} + e_1p_{a+s-1} + \cdots + e_sp_{a} + e_{s+1}p_{a-1} - g_{s+2}e_{s+2} - \cdots - g_ne_n = 0  
$$
for some $g_j\in\tilde{R}$. 
Hence, substituting 
$$
p_{a+s}= -(e_1p_{a+s-1} + \cdots + e_{s+1}p_{a-1}  - g_{s+2}e_{s+2} - \cdots - g_ne_n)
$$
in the above equality~($\star$), we have 
$$
(f+f_{s+1}p_{a-1})e_{s+1} \in (p_a,p_{a+1},\ldots,p_{a+s-1},e_{s+2},e_{s+3},\ldots,e_n,z). 
$$
Since 
$$
\{p_a,p_{a+1},\ldots,p_{a+s-1},e_{s+1},e_{s+2},\ldots,e_n,z\}
$$
is a regular sequence by Remark~\ref{Rem3-3}, 
it follows that 
$$
(f+f_{s+1}p_{a-1}) \in (p_a,p_{a+1},\ldots,p_{a+s-1},e_{s+2},e_{s+3},\ldots,e_n,z), 
$$
and hence $f \in J^\prime$. 
Thus $J:e_{s+1} \subset J^\prime$. 

(2) 
Put $J=(p_a,p_{a+1},\ldots,p_{a+n-1},z)$ and $J^\prime = (p_{a-1},p_a,\ldots,p_{a+n-2},z)$. 
Newton's identity says that 
$$
e_0p_{a+n-1} + e_1p_{a+n-2} + \cdots + e_{n-1}p_a + e_np_{a-1}=0, 
$$
and hence $e_np_{a-1} \in J$, i.e., $p_{a-1} \in J:e_n$. 
Thus $J:e_n \supset J^\prime$. 

Let $f\in J:e_n$. 
Since $e_nf \in J$, 
$e_nf$ can be expressed as 
$$
e_nf = f_1p_a+f_2p_{a+1}+\cdots + f_np_{a+n-1} + f_{n+1}z
$$
for some $f_j\in\tilde{R}$. 
Hence, substituting 
$$
p_{a+n-1}= -(e_1p_{a+n-2} + \cdots + e_{n-1}p_a + e_np_{a-1})
$$
in the above equality, we have 
$$
(f+f_np_{a-1})e_n \in (p_a,p_{a+1},\ldots,p_{a+n-2},z). 
$$
Since 
$$
\{p_a,p_{a+1},\ldots,p_{a+n-2},e_n,z\}
$$
is a regular sequence by Remark~\ref{Rem3-3}, 
it follows that 
$$
(f+f_np_{a-1}) \in (p_a,p_{a+1},\ldots,p_{a+n-2},z). 
$$
and hence $f \in J^\prime$. 
Thus $J:e_{s+1} \subset J^\prime$. 
\end{proof}

\begin{remark}\label{Rem3-10} 
Let $a=2$ with the same notation as Theorem~\ref{Th3-1}. 
Then it follows that 
all central simple modules of $A=\tilde{R}/(\tilde{p}_2, \tilde{p}_3, \ldots, \tilde{p}_{n+2})$ 
are isomorphic to the complete intersection 
$R/(e_1, e_2, \ldots, e_n)$. 
\end{remark}

%
%

\section{Complete intersections defined by power sums and elementary symmetric polynomials}
\label{mixed}

Define $\tilde{e}_i \in \tilde{R}$ to be the elementary symmetric polynomial of degree $i$ {\em with the sign} 
so that 
$$
\prod_{i=1}^{n+1}(w-x_i)=\tilde{e}_0w^{n+1} + \tilde{e}_1w^n + \tilde{e}_2w^{n-1} + \cdots + \tilde{e}_{n}w + \tilde{e}_{n+1}, 
$$
where put $x_{n+1}=z$. 
Note that $\tilde{e}_0=1$, $\tilde{e}_{n+1}=-ze_n$ and $\tilde{e}_i=e_i-ze_{i-1}$ for $i=1,2,\ldots,n$. 
\medskip

The purpose of this section is to give a proof of the following theorem. 

\begin{theorem}\label{Th4-1} 
Let $a$ be an integer with $a\geq 2$ and let $I$ be an ideal of $\tilde{R}$ generated by 
$$
\{ 
\underbrace{\tilde{p}_a, \tilde{p}_{a+1}, \ldots, \tilde{p}_{a+b}}_{b+1 \geq 1},
\underbrace{ \tilde{e}_{b+2}, \tilde{e}_{b+3}, \ldots, \tilde{e}_{n+1} }_{n-b \geq 1}
 \}, 
$$ 
where $b$ is an integer such that $0\leq b \leq n-1$. Let $\bar{z}$ be the image of $z$ in $A=\tilde{R}/I$.
Then the Artinian complete intersection $(A, \bar{z})$ has $b+2$ central simple modules. 
The j-th central simple module is given by

$$
U_j \cong \displaystyle\frac{R}{(
\underbrace{p_{a-1}, p_a, \ldots, p_{a+j-3}}_{j-1}, 
\underbrace{e_j, e_{j+1}, \ldots, e_n}_{n-j+1}
)} 
$$
for $ \; j=1, 2, \ldots, b+2$.

\end{theorem}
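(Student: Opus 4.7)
The plan is to mirror the proof of Theorem~\ref{Th3-1}(2): verify that $A=\tilde R/I$ is an Artinian complete intersection, show that the chain $\{(I:z^i)+(z)\}_{i\ge 0}$ sweeps out a portion of the chain of ideals from Lemma~\ref{KeyLem1}, and then read off the central simple modules. Since the identification of the quotients $\mathfrak a_{n+2-j}/\mathfrak a_{n+1-j}$ with the announced $R$-modules has already been carried out in the proof of Theorem~\ref{Th3-1}(2) via Lemma~\ref{lem:colon}, the only genuinely new work is the colon computation.

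I would begin by verifying that the $n+1$ given generators form a regular sequence in $\tilde R$. Reducing modulo $z$ yields
\[
I+(z)\;=\;(p_a,\ldots,p_{a+b},\,e_{b+2},\ldots,e_n,\,z)\;=\;\mathfrak a_{n-b-1},
\]
since $\tilde p_i\equiv p_i$ and $\tilde e_i\equiv e_i$ for $i\le n$ while $\tilde e_{n+1}=-ze_n\equiv 0$. The sequence $\{p_a,\ldots,p_{a+b},e_{b+2},\ldots,e_n\}$ is regular in $R$ by Remark~\ref{Rem3-3}(1) (applied with $a$ replaced by $a+1$), so $I+(z)$ is $\mathfrak m$-primary and $I$ is thus a complete intersection.

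The heart of the proof is to establish that $(I:z^i)+(z)$ ascends through precisely
\[
\mathfrak a_{n-b-1}\;\subsetneq\; \mathfrak a_{n-b}\;\subsetneq\; \cdots\; \subsetneq\; \mathfrak a_{n+1}
\]
as $i$ grows. The first step is the observation that $\tilde e_{n+1}=-ze_n\in I$ gives $e_n\in I:z$. Inductively, the identities $\tilde e_{n+1-k}=e_{n+1-k}-ze_{n-k}$ for $k=0,1,\ldots,n-b-1$ yield $z^{k+1}e_{n-k}\in I$, so each of $e_n,e_{n-1},\ldots,e_{b+1}$ enters an appropriate colon ideal at the required depth. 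To fully track the filtration I would prove a mixed analogue of Proposition~\ref{Prop3-5}: under successive $z^a$-colons the $\tilde e$-generators are replaced by $z^{\cdot}f^{(\cdot)}$-type elements, using the extended Newton identity $\sum_{i=0}^{n+1}\tilde e_i\tilde p_{m-i}=0$ (valid for $m\ge n+1$) together with matrix identities of the form $z^af^{(1)}=f^{(0)}\phi+\mathbf e\tilde{\mathbf P}\mathbf u$ introduced in the proof of Proposition~\ref{Prop3-5}. The outcome, in parallel with Lemma~\ref{KeyLem1}(1), is the chain displayed above with exactly $b+2$ strict inclusions.

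Given this chain, Remark~\ref{Re_csm} identifies the $j$-th central simple module with $\mathfrak a_{n+2-j}/\mathfrak a_{n+1-j}$ for $j=1,\ldots,b+2$, and Lemma~\ref{lem:colon} then yields the stated description exactly as in the final paragraph of the proof of Theorem~\ref{Th3-1}(2). The principal obstacle is the mixed form of Proposition~\ref{Prop3-5}: one must determine precisely how the $\tilde e$-generators evolve under $z^a$-colon and interact with the $\tilde p$-generators. I expect this step to consist of a careful adaptation of the matrix argument given for the case $k=1$ of Proposition~\ref{Prop3-5} to the new generating set, and to be the main technical content beyond what is already developed in Section~\ref{consecutive}.
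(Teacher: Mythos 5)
Your overall plan --- verify $A$ is a complete intersection, show that the filtration $\{(I:z^i)+(z)\}$ sweeps out a tail of the chain of Lemma~\ref{KeyLem1}, then read off the central simple modules via Lemma~\ref{lem:colon} --- is exactly the strategy of the paper, and your observation that the relevant chain is literally $\mathfrak a_{n-b-1}\subsetneq\cdots\subsetneq\mathfrak a_{n+1}$ is correct: the ideals $\mathfrak b_k$ of Lemma~\ref{KeyLem2} satisfy $\mathfrak b_k=\mathfrak a_{n-b-1+k}$, and your inductive step $z^{k+1}e_{n-k}\in I$ is precisely Step~1 of the proof of Lemma~\ref{lem:4-2}.

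However, the proposal leaves the genuine technical content --- what the paper proves as Proposition~\ref{Prop4-1} --- as an expectation rather than an argument, and the adaptation is not as mechanical as ``replace generators and re-run the matrix identity''. The paper does not simply colon $f^{(k)}$ out of the ideal; it replaces $f^{(k)}$ by the \emph{truncated} polynomial $g^{(k)}$ (the degree-$(b-k)$ truncation of $f^{(k)}$), because only $e_{b+1},\ldots,e_n$, not the full $f^{(0)}$, are available as generators. To extract $z^ag^{(1)}$ from $z^af^{(1)}=f^{(0)}\phi+\mathbf e\mathbf P\mathbf u$ one must compare coefficients of $z^{\geq n-b}$ and cancel the low-order tail of $f^{(0)}$ against $e_{b+1},\ldots,e_n$, then divide out a factor $z^{n-b}$ using the fact that $\{z,g^{(0)},e_{b+1},\ldots,e_n\}$ is a regular sequence; none of this is visible in your sketch. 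Also, the $(n+1)$-variable Newton identity $\sum_{i=0}^{n+1}\tilde e_i\tilde p_{m-i}=0$ that you propose is not the tool used here, since the $\tilde e_i$ with $i\le b+1$ are not generators of $I$; the proof of Proposition~\ref{Prop4-1} instead uses the $n$-variable Newton identity and the above truncation device. So the outline is right, but the key lemma on which everything rests is unproved, and the route you suggest for it would need to be redirected toward the $g^{(k)}$ formalism before it could close.
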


We need some preparations for the proof of Theorem~\ref{Th4-1}. 

\begin{notation} 
Fix an integer $b$ with $0\leq b <n$. 
Let 
$$
g^{(0)}=g^{(0)}(z)=e_0z^b + e_1z^{b-1} + \cdots + e_{b-1}z + e_b \in \tilde{R} 
$$
and let
$$
g^{(k)}=g^{(k)}(z)=\frac{\partial^k}{\partial z^k}g^{(0)}(z)
$$
be the $k$-times partial derivation of $g^{(0)}$ 
for $k=1,2,\ldots,b$. 
Note that 
$$
f^{(0)}=z^{n-b}g^{(0)}+\sum_{b+1}^ne_iz^{n-i}
$$
(see Notation~\ref{def of f} for the definition of $f^{(0)}$). 
\end{notation}

Following is a key to the proof of Theorem~\ref{Th4-1}, and we can calculate the left child and the right child of our binary tree of complete intersections by using this proposition.

\begin{proposition}\label{Prop4-1}
Fix an integer $a\geq 2$ and an integer $b$ with $0\leq b <n$. 
With the same notation as above, 
define ideals $I_1, I_2, \ldots, I_{b+1}$ of $\tilde{R}$ as follows: 
$$ 
\begin{array}{rcl} 
I_1 & = & (\tilde{p}_a, \tilde{p}_{a+1}, \ldots, \tilde{p}_{a+b}, e_{b+1}, e_{b+2}, \ldots, e_n), \\
I_2 & = & (\tilde{p}_a, \tilde{p}_{a+1}, \ldots, \tilde{p}_{a+b-1}, g^{(0)}, e_{b+1}, e_{b+2}, \ldots, e_n), \\
I_3 & = & (\tilde{p}_a, \tilde{p}_{a+1}, \ldots, \tilde{p}_{a+b-2}, g^{(1)}, g^{(0)}, e_{b+1}, e_{b+2}, \ldots, e_n), \\
I_k & = & (\tilde{p}_a, \tilde{p}_{a+1}, \ldots, \tilde{p}_{a+b+1-k}, g^{(k-2)}, g^{(k-3)}, \ldots,g^{(0)}, e_{b+1}, e_{b+2}, \ldots, e_n) \\
\end{array}
$$
for $k=4,5,\ldots,b+1$. 
Then, for $k=1,2,\ldots,b+1$, 
the element $z^ag^{(k-1)}$ can replace the element $\tilde{p}_{a+b+1-k}$ 
in the ideal $I_k$ as a member of the generating set, i.e., 
$$
I_k = (\tilde{p}_a, \tilde{p}_{a+1}, \ldots, \tilde{p}_{a+b-k}, z^ag^{(k-1)}, g^{(k-2)}, g^{(k-3)}, \ldots, g^{(0)}, e_{b+1}, e_{b+2}, \ldots, e_n) 
$$ 
for $k=1,2,\ldots,b$, 
and $I_{b+1}=(z^ag^{(b)}, g^{(b-1)}, g^{(b-2)},\ldots,g^{(0)},e_{b+1},e_{b+2},\ldots,e_n)$. 

\end{proposition}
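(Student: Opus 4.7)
The plan is to mirror the proof of Proposition~\ref{Prop3-5}, working modulo the ideal $(e_{b+1},\ldots,e_n)$ throughout. The crucial observation is that
\[
f^{(0)} \;=\; z^{n-b}g^{(0)} + \sum_{i=b+1}^{n} e_i z^{n-i} \;\equiv\; z^{n-b} g^{(0)} \pmod{(e_{b+1},\ldots,e_n)},
\]
so by the Leibniz rule every $f^{(k)}$ reduces modulo $(e_{b+1},\ldots,e_n)$ to a $\tilde{R}$-linear combination of the polynomials $z^{n-b-i} g^{(k-i)}$. Moreover $\tilde{R}/(e_{b+1},\ldots,e_n) \cong \bigl(R/(e_{b+1},\ldots,e_n)\bigr)[z]$ is a polynomial ring in $z$, so $z$ is a non-zero-divisor modulo this ideal and any congruence may legitimately be divided by a power of $z$.

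For the base case $k=1$, I would apply Newton's identity with $m=a+b$: since $a+b>b$, every term $e_i p_{a+b-i}$ with $i>b$ lies in $(e_{b+1},\ldots,e_n)$, hence $\sum_{i=0}^{b} e_i p_{a+b-i} \equiv 0$ modulo this ideal. Substituting $p_j=\tilde{p}_j-z^j$ and recognising $z^a g^{(0)} = \sum_{i=0}^{b} e_i z^{a+b-i}$ immediately gives $\tilde{p}_{a+b} - z^a g^{(0)} \in (\tilde{p}_a,\ldots,\tilde{p}_{a+b-1}, e_{b+1},\ldots,e_n)$. The same argument with an arbitrary $m>b$ in place of $a+b$ yields the auxiliary identity
\[
\sum_{i=0}^{b} e_i \tilde{p}_{m-i} \;\equiv\; z^{m-b} g^{(0)} \pmod{(e_{b+1},\ldots,e_n)}, \qquad (\star)
\]
which will be used to simplify matrix products at the next step.

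For $k=2$, I would take the identity $2z^a f^{(1)} = f^{(0)}\phi + \mathbf{e}\tilde{\mathbf{P}}\mathbf{u}$ derived in the proof of Proposition~\ref{Prop3-5} (with $\phi=\sum_{i=0}^{a-1} p_i z^{a-1-i}$) and reduce it modulo $(e_{b+1},\ldots,e_n)$. The left side becomes $2(n-b) z^{a+n-b-1} g^{(0)} + 2 z^{a+n-b} g^{(1)}$. On the right side, writing
\[
\mathbf{e}\tilde{\mathbf{P}}\mathbf{u} \;=\; \sum_{j=1}^{n}\Bigl(\sum_{s=0}^{\min(b,\,n-j)} e_s \tilde{p}_{a+n-s-j}\Bigr) z^{j-1},
\]
the first $n-b$ columns ($j\leq n-b$) collapse via identity $(\star)$ to a total of $(n-b) z^{a+n-b-1} g^{(0)}$, while the remaining $b$ columns assemble into $z^{n-b}\,\mathbf{e}'\tilde{\mathbf{P}}'\mathbf{u}'$, where $\mathbf{e}'=[e_{b-1},\ldots,e_0]$, $\tilde{\mathbf{P}}'$ is the $b\times b$ lower-triangular matrix with $\tilde{p}_a,\tilde{p}_{a+1},\ldots$ on its subdiagonals, and $\mathbf{u}'={}^t[1,z,\ldots,z^{b-1}]$. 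Collecting terms and cancelling the common factor $z^{n-b}$ yields
\[
2 z^a g^{(1)} \;\equiv\; g^{(0)}\bigl(\phi - (n-b) z^{a-1}\bigr) + \mathbf{e}'\tilde{\mathbf{P}}'\mathbf{u}' \pmod{(e_{b+1},\ldots,e_n)}.
\]
Since the coefficient of $\tilde{p}_{a+b-1}$ in $\mathbf{e}'\tilde{\mathbf{P}}'\mathbf{u}'$ is $e_0=1$, the $k=2$ assertion follows.

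For $k\geq 3$, I would differentiate the preceding identity in $z$ and invoke the truncated analogue of Lemma~\ref{Lem3-6}, namely $\mathbf{e}'\mathbf{Z}'\{(\mathbf{u}')^{(\ell)}\} = \tfrac{1}{\ell+1} g^{(\ell+1)}$, exactly as in the higher-$k$ portion of Proposition~\ref{Prop3-5}; at every stage the coefficient of the highest surviving $\tilde{p}$ is $e_0=1$, which produces the required replacement of $\tilde{p}_{a+b+1-k}$ by a scalar multiple of $z^a g^{(k-1)}$. The main obstacle is the splitting computation in the $k=2$ case: verifying that, modulo $(e_{b+1},\ldots,e_n)$, the matrix product $\mathbf{e}\tilde{\mathbf{P}}\mathbf{u}$ decomposes precisely as a polynomial multiple of $g^{(0)}$ plus $z^{n-b}$ times a smaller matrix product. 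This splitting rests on the fact that identity $(\star)$ applies to an inner column sum exactly when all $b+1$ surviving entries $e_0,\ldots,e_b$ appear, which happens precisely in the first $n-b$ columns.
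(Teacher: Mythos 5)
Your proposal is correct and follows essentially the same route as the paper's proof: reduce the matrix identity $z^a f^{(1)}=f^{(0)}\phi+\mathbf{e}\mathbf{P}\mathbf{u}$ from Proposition~\ref{Prop3-5} modulo $(e_{b+1},\ldots,e_n)$, use $f^{(0)}\equiv z^{n-b}g^{(0)}$ and its derivative to collapse $f^{(i)}$, identify the surviving block of the matrix product with $z^{n-b}\mathbf{e}'\mathbf{P}'\mathbf{u}'$, cancel $z^{n-b}$, and then iterate by differentiation using the $b$-truncated analogue of Lemma~\ref{Lem3-6}. Your auxiliary identity $(\star)$ makes explicit the splitting that the paper treats more tersely as ``comparing the terms of degree $n-b$ or greater,'' and you retain the exact $g^{(0)}$-coefficient $\phi-(n-b)z^{a-1}$ where the paper simply works modulo $(g^{(0)},e_{b+1},\ldots,e_n)$; these are cosmetic differences. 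One caution: for the inductive step $k\geq 3$ you should differentiate the congruence in the form containing the untilded matrix $\mathbf{P}'$ (whose entries $p_j$ are $z$-constants), not the $\tilde{\mathbf{P}}'$ form, since $\partial\tilde{p}_j/\partial z\neq 0$ would introduce stray terms; the paper's proof of Proposition~\ref{Prop3-5} keeps $\mathbf{P}'$ throughout the differentiation and only substitutes $\mathbf{P}'=\tilde{\mathbf{P}}'-z^a\mathbf{Z}'$ at the final stage of each case, and you should do the same.
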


\begin{proof} 
We treat the three cases for $k=1,2,3$ separately. 
As one will see the proof for $k=3$ works for all other cases.

Let $k=1$. 
We show that 
$$
\tilde{p}_{a+b}-z^ag^{(0)} \in (\tilde{p}_a, \tilde{p}_{a+1}, \ldots, \tilde{p}_{a+b-1}, e_{b+1}, e_{b+2}, \ldots, e_n). 
$$
By Newton's identity, we have 
$$
e_0p_{a+b}+e_1p_{a+b-1}+ \cdots +e_bp_a \in (e_{b+1},e_{b+2},\ldots,e_n), 
$$
and hence 
$$
e_0p_{a+b}+e_1p_{a+b-1}+ \cdots +e_bp_a = h_{b+1}e_{b+1} + h_{b+2}e_{b+2} + \cdots + h_ne_n 
$$
for some $h_j \in R$. 
In this equality replace $p_{a+b-i}$ by $\tilde{p}_{a+b-i}-z^{a+b-i}$ for $i=0,1,\ldots,b$. 
Then we have 
$$
\sum_{i=0}^b e_i\tilde{p}_{a+b-i}=\sum_{i=0}^be_iz^{a+b-i} + \sum_{i=b+1}^nh_ie_i=z^ag^{(0)} +\sum_{i=b+1}^nh_ie_i. 
$$
Hence, since $e_0=1$, 
it follows that 
$$
\tilde{p}_{a+b}-z^ag^{(0)}=-\sum_{i=1}^b e_i\tilde{p}_{a+b-i}+\sum_{i=b+1}^nh_ie_i. 
$$
This proves the assertion for the case $k=1$.

Let $k=2$. 
We show that 
$$
\tilde{p}_{a+b-1}-2z^ag^{(1)}  \in (\tilde{p}_a, \tilde{p}_{a+1}, \ldots, \tilde{p}_{a+b-2}, g^{(0)}, e_{b+1}, e_{b+2}, \ldots, e_n). 
$$
Differentiating the equation $f^{(0)}=z^{n-b}g^{(0)}+e_{b+1}z^{n-b-1}+\cdots+e_{n-1}z+e_n$, 
we have 
$$ 
f^{(1)}  \equiv z^{n-b}g^{(1)}+(n-b)z^{n-b-1}g^{(0)} \mod (e_{b+1}, e_{b+2}, \ldots, e_n). 
$$
In the proof of Proposition~\ref{Prop3-5}, 
we have proved that 
$$
z^af^{(1)}=f^{(0)}\phi+{\bf e}{\bf P}{\bf u}. 
$$ 
Hence the following holds: 
$$
z^a\{z^{n-b}g^{(1)}+(n-b)z^{n-b-1}g^{(0)}\} \equiv  {\bf e}{\bf P}{\bf u} \mod (f^{(0)}, e_{b+1}, e_{b+2}, \ldots, e_n).
$$
Furthermore, 
comparing the terms of degree than $(n-b)$ or greater on both sides of the above equation, 
we have 
$$
z^a\{z^{n-b}g^{(1)}+(n-b)z^{n-b-1}g^{(0)}\} 
\equiv z^{n-b}{\bf e^\prime}{\bf P^\prime}{\bf u^\prime} \mod (f^{(0)}, e_{b+1}, e_{b+2}, \ldots, e_n), 
$$
where 
${\bf e^\prime}$, ${\bf P^\prime}$ and ${\bf u^\prime}$ 
are three matrices of the forms 
$$
{\bf e^\prime}=[e_{b-1}, e_{b-2}, \ldots, e_1, e_0], 
{\bf P^\prime}=
\left[
\begin{array}{ccccccc}
p_a          &      &           & \bigzerou  \\
p_{a+1}     & p_a &           &            \\
\vdots     & \ddots     & \ddots &           \\
p_{a+b-1}  & \cdots      &  p_{a+1}         & p_a   \\     
\end{array}
\right], 
{\bf u^\prime}=
\left[
\begin{array}{c}
1          \\
z    \\
\vdots     \\
z^{b-1}  \\     
\end{array}
\right].
$$
Since the ideal $(f^{(0)}, e_{b+1}, e_{b+2},\ldots,e_n)$ is contained in the ideal \\
 $(g^{(0)}, e_{b+1}, e_{b+2},\ldots,e_n)$, 
it follows that 
$$
z^{a+n-b}g^{(1)} \equiv z^{n-b}{\bf e^\prime}{\bf P^\prime}{\bf u^\prime} \mod (g^{(0)}, e_{b+1}, e_{b+2}, \ldots, e_n). 
$$
Thus, noting that $\{z, g^{(0)}, e_{b+1}, e_{b+2}, \ldots, e_n\}$ is a regular sequence, 
we have the following: 
$$
z^{a}g^{(1)} \equiv {\bf e^\prime}{\bf P^\prime}{\bf u^\prime} \mod (g^{(0)}, e_{b+1}, e_{b+2}, \ldots, e_n). 
$$
Put 
$$
\tilde{{\bf P}}^\prime=
\left[
\begin{array}{ccccccc}
\tilde{p}_a          &      &           & \bigzerou  \\
\tilde{p}_{a+1}     & \tilde{p}_a &           &            \\
\vdots     & \ddots     & \ddots &           \\
\tilde{p}_{a+b-1}  & \cdots      &  \tilde{p}_{a+1}         & \tilde{p}_a   \\     
\end{array}
\right], 
{\bf Z}^\prime=
\left[
\begin{array}{ccccccc}
1          &      &           & \bigzerou  \\
z     & 1 &           &            \\
\vdots     & \ddots     & \ddots &           \\
z^{b-1}  & \cdots      &  z        & 1   \\     
\end{array}
\right]. 
$$
Then, since ${\bf P}^\prime=\tilde{{\bf P}}^\prime-z^a{\bf Z}^\prime$, 
it follows that 
$$
z^ag^{(1)} \equiv {\bf e}^\prime\tilde{{\bf P}}^\prime{\bf u}^\prime-z^a{\bf e}^\prime{\bf Z}^\prime{\bf u}^\prime 
\mod (g^{(0)}, e_{b+1}, e_{b+2}, \ldots, e_n). 
$$
Hence, noting that ${\bf e}^\prime{\bf Z}^\prime{\bf u}^\prime=g^{(1)}$, 
we have 
$$
2z^ag^{(1)} \equiv {\bf e}^\prime\tilde{{\bf P}}^\prime{\bf u}^\prime 
\mod (g^{(0)}, e_{b+1}, e_{b+2}, \ldots, e_n). 
$$
This shows that $2z^ag^{(1)}$ is a linear combination of 
$$
\tilde{p}_a, \tilde{p}_{a+1}, \ldots, \tilde{p}_{a+b-1}, g^{(0)}, e_{b+1}, e_{b+2}, \ldots, e_n
$$
with coefficients in $\tilde{R}$. 
One sees that the coefficient of $\tilde{p}_{a+b-1}$ in the linear combination is $e_0=1$. 
This proves the assertion for the case $k=2$.

We proceed to a proof for the case $k=3$. 
We show that 
$$
\tilde{p}_{a+b-2}-\frac{3}{2}z^ag^{(2)}  \in (\tilde{p}_a, \tilde{p}_{a+1}, \ldots, \tilde{p}_{a+b-3}, g^{(1)}, g^{(0)}, e_{b+1}, e_{b+2}, \ldots, e_n). 
$$
Differentiating both sides of the equation 
$$
z^{a}g^{(1)} \equiv {\bf e}^\prime{\bf P}^\prime{\bf u}^\prime \mod (g^{(0)}, e_{b+1}, e_{b+2}, \ldots, e_n), 
$$
we have 
$$
z^ag^{(2)}+az^{a-1}g^{(1)} \equiv 
 {\bf e^\prime}{\bf P^\prime}({\bf u}^\prime)^{(1)} \mod (g^{(0)}, e_{b+1}, e_{b+2}, \ldots, e_n), 
$$
where $({\bf u}^\prime)^{(1)}$ is the transpose of the vector $[0, 1, 2z, 3z^2, \cdots, (b-1)z^{b-2}]$. 
Hence, substituting ${\bf P}^\prime=\tilde{{\bf P}}^\prime-z^a{\bf Z}^\prime$, 
we have 
$$
z^ag^{(2)}+az^{a-1}g^{(1)} \equiv 
 {\bf e}^\prime\tilde{{\bf P}}^\prime({\bf u}^\prime)^{(1)}-z^a{\bf e}^\prime{\bf Z}^\prime({\bf u}^\prime)^{(1)}
\mod (g^{(0)}, e_{b+1}, e_{b+2}, \ldots, e_n), 
$$
Furthermore, a straightforward calculation leads to the following equalities: 
$$
{\bf e}^\prime{\bf Z}^\prime({\bf u}^\prime)^{(1)} = \frac{1}{2}g^{(2)}. 
$$
Thus we get the equality 
$$
\frac{3}{2}z^ag^{(2)}\equiv 
 {\bf e}^\prime\tilde{{\bf P}}^\prime({\bf u}^\prime)^{(1)} \mod (g^{(1)}, g^{(0)}, e_{b+1}, e_{b+2}, \ldots, e_n). 
$$
This shows that $\frac{3}{2}z^ag^{(2)}$ is a linear combination of 
$$
\tilde{p}_a, \tilde{p}_{a+1}, \ldots, \tilde{p}_{a+b-2}, g^{(1)}, g^{(0)}, e_{b+1}, e_{b+2}, \ldots, e_n
$$
with coefficients in $\tilde{R}$. 
One sees that the coefficient of $\tilde{p}_{a+b-2}$ in the linear combination is $e_0=1$. 
This proves the assertion for the case $k=3$. 

Proof for $k>3$ is the same as the case $k=3$ with the obvious modification, 
except that we use the equalities stated in Lemma~\ref{Lem4-1} instead of the equality 
$
{\bf e}^\prime{\bf Z}^\prime({\bf u}^\prime)^{(1)}  = \frac{1}{2}g^{(2)}$. 
\end{proof}

\begin{lemma}\label{Lem4-1} 
With the same notation as the proof of Proposition~\ref{Prop4-1}, 
let $({\bf u}^\prime)^{(k)}$ be the $b$ dimensional column vector as follows: 
$$
({\bf u}^\prime)^{(k)} 
={}^{t}\!\left[\frac{\partial^k}{\partial z^k} 1, \frac{\partial^k}{\partial z^k} z,  \frac{\partial^k}{\partial z^k} z^2, 
\ldots, \frac{\partial^k}{\partial z^k} z^{b-1} \right], 
$$
where we denote the transpose of a vector by ${}^{t}[\ ]$. 
Then the following holds: 
$$
{\bf e}^\prime{\bf Z}^\prime\{({\bf u}^\prime)^{(k)}\} = \frac{1}{k+1}g^{(k+1)}
$$
for $k=2,3,\ldots,b-1$. 
\end{lemma}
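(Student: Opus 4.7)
The plan is to imitate the proof of Lemma~\ref{Lem3-6} verbatim, with the obvious bookkeeping changes ($n \leadsto b$, $f \leadsto g$, ${\bf e} \leadsto {\bf e}^\prime$, ${\bf Z} \leadsto {\bf Z}^\prime$, ${\bf u} \leadsto {\bf u}^\prime$). The starting observation is that the $(i,j)$-entry of ${\bf Z}^\prime$ is $z^{i-j}$ for $i\geq j$ and $0$ otherwise, and ${\bf u}^\prime_j = z^{j-1}$, so the $i$-th component of ${\bf Z}^\prime{\bf u}^\prime$ is $\sum_{j=1}^{i}z^{i-j}z^{j-1}=iz^{i-1}=\tfrac{\partial}{\partial z}z^i$. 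Consequently
$$
{\bf Z}^\prime{\bf u}^\prime = {}^{t}\!\left[\tfrac{\partial}{\partial z}z,\ \tfrac{\partial}{\partial z}z^2,\ \ldots,\ \tfrac{\partial}{\partial z}z^{b}\right],
$$
and applying ${\bf e}^\prime$ from the left gives ${\bf e}^\prime{\bf Z}^\prime{\bf u}^\prime = g^{(1)}$, matching the identity already used in the proof of Proposition~\ref{Prop4-1}.

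Next, since ${\bf e}^\prime$ has constant entries, $k$-fold differentiation commutes with left multiplication by ${\bf e}^\prime$, so
$$
g^{(k+1)} = \tfrac{\partial^k}{\partial z^k}g^{(1)} = \tfrac{\partial^k}{\partial z^k}\bigl({\bf e}^\prime{\bf Z}^\prime{\bf u}^\prime\bigr) = {\bf e}^\prime\,\tfrac{\partial^k}{\partial z^k}\bigl({\bf Z}^\prime{\bf u}^\prime\bigr).
$$
It therefore suffices to prove the vector identity
$$
\tfrac{\partial^k}{\partial z^k}\bigl({\bf Z}^\prime{\bf u}^\prime\bigr) = (k+1)\,{\bf Z}^\prime\{({\bf u}^\prime)^{(k)}\},
$$
after which multiplying by ${\bf e}^\prime$ and dividing by $k+1$ yields the claim.

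This vector identity is verified componentwise, exactly as in Lemma~\ref{Lem3-6}. On the left, the $i$-th component is $\tfrac{\partial^{k+1}}{\partial z^{k+1}}(z^i) = i^{\underline{k+1}}\,z^{i-k-1}$. On the right, the $i$-th component is
$$
(k+1)\sum_{j=1}^{i} z^{i-j}\tfrac{\partial^{k}}{\partial z^{k}}(z^{j-1}) = \Bigl((k+1)\sum_{j=1}^{i}(j-1)^{\underline{k}}\Bigr)z^{i-k-1},
$$
and the telescoping identity $(k+1)\sum_{j=1}^{i}(j-1)^{\underline{k}} = \sum_{j=1}^{i}\bigl(j^{\underline{k+1}}-(j-1)^{\underline{k+1}}\bigr) = i^{\underline{k+1}}$ matches the two sides.

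There is no real obstacle here; the only mild concern is making sure the index ranges line up, since ${\bf Z}^\prime$ and ${\bf u}^\prime$ have size $b$ rather than $n$, and the polynomial $g^{(0)}$ has degree $b$ (rather than $n$) in $z$. These adjustments are purely cosmetic, and the hypothesis $k \leq b-1$ in the statement is exactly what is needed for $g^{(k+1)}$ to be nonzero and for $({\bf u}^\prime)^{(k)}$ to still be a sensible $b$-vector. So the entire argument reduces to the two displayed componentwise computations above.
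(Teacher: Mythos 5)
Your proof is correct and follows essentially the same route as the paper: both reduce the claim to the vector identity $\frac{\partial^k}{\partial z^k}({\bf Z}^\prime{\bf u}^\prime)=(k+1){\bf Z}^\prime\{({\bf u}^\prime)^{(k)}\}$, then apply ${\bf e}^\prime$ and use ${\bf e}^\prime{\bf Z}^\prime{\bf u}^\prime=g^{(1)}$. The only difference is that the paper simply cites the proof of Lemma~\ref{Lem3-6} (with $n$ replaced by $b$) for this vector identity, whereas you re-derive it componentwise; the computations are identical.
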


\begin{proof}
In the proof of Lemma~\ref{Lem3-6}, replacing $n$ by $b$, we have
$$
\frac{\partial^k}{\partial z^k}({\bf Z}^\prime{\bf u}^\prime)=(k+1){\bf Z}^\prime\{({\bf u}^\prime)^{(k)}\}.
$$
Hence, noting that 
${\bf e}^\prime {\bf Z}^\prime {\bf u}^\prime = g^{(1)}$, 
we have
\begin{eqnarray*}
{\bf e}^\prime{\bf Z}^\prime\{({\bf u}^\prime)^{(k)}\} 
&=&\frac{1}{k+1}{\bf e}^\prime\frac{\partial^k}{\partial z^k}({\bf Z}^\prime{\bf u}^\prime)
=\frac{1}{k+1}\frac{\partial^k}{\partial z^k}({\bf e}^\prime{\bf Z}^\prime{\bf u}^\prime)\\
&=&\frac{1}{k+1}\frac{\partial^k}{\partial z^k}g^{(1)}
= \frac{1}{k+1}g^{(k+1)}.
\end{eqnarray*}
\end{proof}

\begin{lemma}\label{lem:4-2} 
With the same notation as Theorem~\ref{Th4-1}, 
define the ideal $\mathfrak{b}_0$ of $\tilde{R}$, 
$$
\mathfrak{b}_0=
 \left\{
 \begin{array}{ll}
(p_a,p_{a+1},\ldots,p_{a+b},e_{b+2},e_{b+3},\ldots,e_n,z)  & ($if$ \; \, b=0,1,\ldots,n-2), \\
(p_a, p_{a+1}, \ldots, p_{a+n-1}, z)  & ($if$ \; \, b=n-1).
\end{array}
\right.
$$
Then:
\begin{itemize}
\item[(1)] 
$(I:z^i)+(z)=\mathfrak{b}_0$ for $i=0,1,2,\ldots,n-b-1$. 
\item[(2)] 
$I:z^{n-b}=(\tilde{p}_a, \tilde{p}_{a+1}, \ldots, \tilde{p}_{a+b}, e_{b+1}, e_{b+2}, \ldots, e_n)$. 
\end{itemize}
\end{lemma}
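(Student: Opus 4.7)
The plan is to reformulate $I$ so that all colon ideals $I:z^i$ admit a single closed form, and then read off both conclusions.

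First, I derive an alternative presentation of $I$. Using $\tilde e_{i+1}=e_{i+1}-ze_i$ for $b+1\le i\le n-1$ together with $\tilde e_{n+1}=-ze_n$, a direct telescoping gives
\[
\tilde e_{n+1}+z\,\tilde e_n+z^2\,\tilde e_{n-1}+\cdots+z^{n-b-1}\,\tilde e_{b+2} \;=\; -z^{n-b}e_{b+1}.
\]
Setting $J:=(\tilde p_a,\ldots,\tilde p_{a+b},\tilde e_{b+2},\ldots,\tilde e_n)$ (with the $\tilde e$-block empty when $b=n-1$), this displays $\tilde e_{n+1}$ as $-z^{n-b}e_{b+1}$ modulo $J$, hence
\[
I \;=\; J+(z^{n-b}e_{b+1}).
\]

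Second, I would verify that $\{z,\tilde p_a,\ldots,\tilde p_{a+b},\tilde e_{b+2},\ldots,\tilde e_n\}$ is a regular sequence in $\tilde R$. Since $\tilde p_j\equiv p_j$ and $\tilde e_i\equiv e_i\pmod z$, this reduces to showing $\{p_a,\ldots,p_{a+b},e_{b+2},\ldots,e_n\}$ is regular in $R$. Because $R$ is free of rank $n!$ over $R^{S_n}=K[e_1,\ldots,e_n]$ (char $0$), quotienting by $(e_{b+2},\ldots,e_n)$ yields a free module of rank $n!$ over $K[e_1,\ldots,e_{b+1}]$. Identifying this latter ring with $K[y_1,\ldots,y_{b+1}]^{S_{b+1}}$ via Newton, the images of $p_a,p_{a+1},\ldots,p_{a+b}$ become $b+1$ consecutive power sums in $b+1$ variables, which form a regular sequence in $K[y_1,\ldots,y_{b+1}]$ by Lemma~7.2 of \cite{HW2} (as recalled in Remark~\ref{Rem3-3}); faithful flatness of $K[y_1,\ldots,y_{b+1}]$ over its invariant subring descends the conclusion to $K[e_1,\ldots,e_{b+1}]$, and ascends it back across the free extension to $R/(e_{b+2},\ldots,e_n)$.

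With $z$ a non-zero-divisor modulo $J$, an elementary cancellation argument ($z^i(f-r\,z^{n-b-i}e_{b+1})\in J$ $\Rightarrow$ $f-r\,z^{n-b-i}e_{b+1}\in J$) produces, for $0\le i\le n-b$,
\[
I:z^i \;=\; J+(z^{n-b-i}e_{b+1}).
\]
For $0\le i\le n-b-1$ the extra generator $z^{n-b-i}e_{b+1}$ lies in $(z)$, so $(I:z^i)+(z)=J+(z)=\mathfrak b_0$, which is (1). For $i=n-b$ one gets $I:z^{n-b}=J+(e_{b+1})$; applying $e_{j+1}=\tilde e_{j+1}+z\,e_j$ iteratively, $e_{b+1}$ together with $\tilde e_{b+2}$ produces $e_{b+2}$, then $e_{b+3}$, and so on up to $e_n$, so this ideal collapses to $(\tilde p_a,\ldots,\tilde p_{a+b},e_{b+1},e_{b+2},\ldots,e_n)$, which is (2).

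The main technical obstacle is the regularity claim of the second step: Remark~\ref{Rem3-3} is phrased for $\{p_{a-1},\ldots,p_{a+j-3},e_j,\ldots,e_n\}$ with $j=b+2$, i.e.\ for a $p$-block starting at $p_{a-1}$ and ending at $p_{a+b-1}$, whereas the sequence at hand starts at $p_a$ and ends at $p_{a+b}$. The cleanest bridge is through the Chevalley--Shephard--Todd freeness of $R$ over $R^{S_n}$, which reduces the question to regularity of $b+1$ consecutive power sums in a polynomial ring in $b+1$ variables; the cited lemma then applies directly with shifted parameters.
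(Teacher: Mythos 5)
Your proof is correct, and it takes a genuinely different route from the paper's. The paper establishes part (1) by marching through a chain of intermediate ideals $J_1, J_2, \ldots, J_{n-b}$, each obtained from the previous by replacing one $\tilde e$-generator with an $e$-generator and a stray factor of $z$, and verifying $J_j : z = J_{j+1}$ at every step. You instead expose $\tilde e_{n+1}$ as $-z^{n-b}e_{b+1}$ modulo $J=(\tilde p_a,\ldots,\tilde p_{a+b},\tilde e_{b+2},\ldots,\tilde e_n)$ via a single telescoping sum, which yields the uniform closed form $I:z^i = J + (z^{n-b-i}e_{b+1})$ for all $0 \le i \le n-b$. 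This describes the same chain of ideals, but your version makes the entire colon chain visible at once, and both conclusions (1) and (2) then drop out in one line apiece. This is a real simplification over the paper's inductive computation.

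One small remark on the regularity step, which you flag as the main obstacle. The mismatch you notice in Remark~\ref{Rem3-3} --- that it is stated for a $p$-block beginning at $p_{a-1}$ --- is only apparent: the parameter $a$ there is free, and applying the remark with $a$ replaced by $a+1$ and $j=b+2$ gives precisely that $\{p_a, p_{a+1}, \ldots, p_{a+b}, e_{b+2}, \ldots, e_n\}$ is a regular sequence in $R$, which is exactly what you need and is what the paper's citation of Remark~\ref{Rem3-3} tacitly uses. Your detour through Chevalley--Shephard--Todd freeness of $R$ over $K[e_1,\ldots,e_n]$, the reduction to consecutive power sums in $b+1$ variables, and the flat descent/ascent are all correct and make a nice self-contained argument, but they are considerably heavier machinery than the one-line parameter shift.
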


\begin{proof} 
If $b=n-1$, then the assertion is clear. So we only prove the case when $b=0,1,\ldots,n-2$.
Since $\tilde{e}_{n+1}=-ze_{n}$ and $\tilde{e}_i=e_i-ze_{i-1}$ for $i=b+2, b+3,\ldots,n$, 
it is easy to check that the equality $I+(z)=\mathfrak{b}_0$ holds. 
Furthermore, we have   
$$
\begin{array}{cl}
  & (\tilde{p}_a, \tilde{p}_{a+1}, \ldots, \tilde{p}_{a+b}, \tilde{e}_{b+2}, \tilde{e}_{b+3}, \ldots, \tilde{e}_n, z) \\
= & (p_a, p_{a+1}, \ldots, p_{a+b}, e_{b+2}, e_{b+3}, \ldots, e_n, z). 
\end{array}
$$
Hence it follows by Remark~\ref{Rem3-3} that  
$$
\{\tilde{p}_a, \tilde{p}_{a+1}, \ldots, \tilde{p}_{a+b}, \tilde{e}_{b+2}, \tilde{e}_{b+3}, \ldots, \tilde{e}_n, z\}
$$
is a regular sequence in $\tilde{R}$. 
Thus, since $\tilde{e}_{n+1}=-ze_n$, 
we have  
$$
\begin{array}{rcl}
I:z & = & (\tilde{p}_a, \tilde{p}_{a+1}, \ldots, \tilde{p}_{a+b}, \tilde{e}_{b+2}, \tilde{e}_{b+3}, \ldots, \tilde{e}_n, ze_n):z \\
   & = & (\tilde{p}_a, \tilde{p}_{a+1}, \ldots, \tilde{p}_{a+b}, \tilde{e}_{b+2}, \tilde{e}_{b+3}, \ldots, \tilde{e}_n, e_n) \\
   & = & (\tilde{p}_a, \tilde{p}_{a+1}, \ldots, \tilde{p}_{a+b}, \tilde{e}_{b+2}, \tilde{e}_{b+3}, \ldots, \tilde{e}_{n-1}, ze_{n-1}, e_n). \\
\end{array}
$$
Hence $(I:z)+(z)=\mathfrak{b}_0$.

Consider the ideals $\{J_j\}$ as follows: 
$$
J_j=(\tilde{p}_a, \tilde{p}_{a+1}, \ldots, \tilde{p}_{a+b}, 
\tilde{e}_{b+2}, \tilde{e}_{b+3}, \ldots, \tilde{e}_{n-j}, ze_{n-j}, e_{n-j+1},e_{n-j+2},\ldots,e_n )
$$
for $j=1,2,\ldots,n-b-2$, and 
$$
J_{n-b-1}=(\tilde{p}_a, \tilde{p}_{a+1}, \ldots, \tilde{p}_{a+b}, ze_{b+1}, e_{b+2}, e_{b+3},\ldots, e_n). 
$$
Similarly, one sees that 
$$
\{\tilde{p}_a, \tilde{p}_{a+1}, \ldots, \tilde{p}_{a+b}, 
\tilde{e}_{b+2}, \tilde{e}_{b+3}, \ldots, \tilde{e}_{n-j}, z, e_{n-j+1},e_{n-j+2},\ldots,e_n\}
$$
is also a regular sequence in $\tilde{R}$. 
Hence we have 
$$
J_j:z=J_{j+1}
$$
for $j=1,2,\ldots,n-b-1$, where put 
$$
J_{n-b}=(\tilde{p}_a, \tilde{p}_{a+1}, \ldots, \tilde{p}_{a+b}, e_{b+1}, e_{b+2}, \ldots, e_n). 
$$
Therefore, we see that $I:z^i=J_i$ for $i=1,2,\ldots,n-b$. 
In particular, the equality (2) holds. 
Furthermore, we have the equalities of (1):  
$$
(I:z^i)+(z) = J_i+(z) = \mathfrak{b}_0
$$
for $i=2,3,\ldots,n-b-1$. 
\end{proof}

\begin{lemma}\label{KeyLem2}
With the same notation as Theorem~\ref{Th4-1}, 
define the ideals $\mathfrak{b}_1, \mathfrak{b}_2, \ldots, \mathfrak{b}_{b+2}$ of $\tilde{R}$ as follows: 
$$
\begin{array}{rcl}
\mathfrak{b}_1 &=& (p_a,p_{a+1},\ldots,p_{a+b-1},e_{b+1},e_{b+2},\ldots,e_n,z),  \\
\mathfrak{b}_2 &=& (p_a,p_{a+1},\ldots,p_{a+b-2},e_{b},e_{b+1},\ldots,e_n,z),  \\
\mathfrak{b}_k &=& (p_a,p_{a+1},\ldots,p_{a+b-k},e_{b+2-k},e_{b+3-k},\ldots,e_n,z)
\end{array}
$$ 
for $k=3,4,\ldots,b$, $\mathfrak{b}_{b+1} = (e_1,e_2,\ldots,e_n,z)$ and $\mathfrak{b}_{b+2}=\tilde{R}$. 
Put 
$$
c_k=n-b+(k-1)a
$$ 
for $k=1,2,\ldots,b+2$. 
\begin{itemize}
\item[(1)] 
For $k=1,2,\ldots,b+1$, 
we have the following equalities: 
$$
(I:z^i)+(z)=\mathfrak{b}_k
$$ 
for $i=c_k, c_k+1,\ldots,c_{k+1}-1$, 
and $(I:z^{c_{b+2}})+(z)=\mathfrak{b}_{b+2}$. 
\item[(2)] 
The following inclusions hold: 
$$
\mathfrak{b}_0 \subsetneq \mathfrak{b}_1 \subsetneq \mathfrak{b}_2 
\subsetneq \cdots \subsetneq \mathfrak{b}_{b+1} \subsetneq \mathfrak{b}_{b+2},  
$$
where $\mathfrak{b}_0$ is the ideal defined in Lemma~\ref{lem:4-2}. 
\end{itemize} 
\end{lemma}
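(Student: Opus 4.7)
The strategy mirrors the proof of Lemma~\ref{KeyLem1}, using Proposition~\ref{Prop4-1} in place of Proposition~\ref{Prop3-5} and Lemma~\ref{lem:4-2} as the base case of an induction on $k$.

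\textbf{Part (1).} The base case $0\le i\le c_1-1=n-b-1$ is Lemma~\ref{lem:4-2}(1), and Lemma~\ref{lem:4-2}(2) supplies the starting equality $I:z^{n-b}=I_1$. For the inductive step assume $I:z^{c_k}=I_k$. By Proposition~\ref{Prop4-1},
\[
I_k=(\tilde{p}_a,\ldots,\tilde{p}_{a+b-k},\,z^a g^{(k-1)},\,g^{(k-2)},\ldots,g^{(0)},\,e_{b+1},\ldots,e_n).
\]
The key auxiliary claim is that
\[
\{\tilde{p}_a,\ldots,\tilde{p}_{a+b-k},\,g^{(k-2)},\ldots,g^{(0)},\,e_{b+1},\ldots,e_n,\,z\}
\]
is a regular sequence in $\tilde{R}$. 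Reducing modulo $z$ and using $g^{(i)}|_{z=0}=i!\,e_{b-i}$ converts this sequence (up to nonzero scalars) into a generating set of $\mathfrak{b}_k\cap R=(p_a,\ldots,p_{a+b-k},e_{b-k+2},\ldots,e_n)$, which is a complete intersection in $R$ by Remark~\ref{Rem3-3}. Hence $z$ is regular on the quotient by the remaining generators, and the standard colon identity yields
\[
I_k:z^j=(\tilde{p}_a,\ldots,\tilde{p}_{a+b-k},\,z^{a-j} g^{(k-1)},\,g^{(k-2)},\ldots,g^{(0)},\,e_{b+1},\ldots,e_n)
\]
for $0\le j\le a-1$, together with $I_k:z^a=I_{k+1}$. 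Consequently $I:z^{c_k+j}=I_k:z^j$; reducing modulo $z$ kills $z^{a-j}g^{(k-1)}$ and turns each $g^{(i)}$ into a scalar multiple of $e_{b-i}$, so $(I:z^{c_k+j})+(z)=\mathfrak{b}_k$. The terminal equality $(I:z^{c_{b+2}})+(z)=\tilde{R}=\mathfrak{b}_{b+2}$ follows because $g^{(b)}=b!$ is a unit, which forces $I_{b+1}:z^a=\tilde{R}$.

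\textbf{Part (2).} The inclusions $\mathfrak{b}_k\subset\mathfrak{b}_{k+1}$ come from monotonicity of colon ideals together with Part~(1). For strictness with $0\le k\le b$, each $\mathfrak{b}_k$ is generated by a regular sequence of length $n+1$ by Remark~\ref{Rem3-3}, and passing from $\mathfrak{b}_{k+1}$ to $\mathfrak{b}_k$ swaps the generator $e_{b+1-k}$ (of degree $b+1-k$) for $p_{a+b-k}$ (of degree $a+b-k$). Since $a\ge 2$ this is a strict degree increase, so Remark~\ref{Rem:3-7} gives $\dim_K\tilde{R}/\mathfrak{b}_k>\dim_K\tilde{R}/\mathfrak{b}_{k+1}$ and hence $\mathfrak{b}_k\subsetneq\mathfrak{b}_{k+1}$. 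The last step $\mathfrak{b}_{b+1}\subsetneq\mathfrak{b}_{b+2}=\tilde{R}$ is immediate since $\mathfrak{b}_{b+1}$ lies in the maximal ideal.

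\textbf{Main obstacle.} The technical heart of the argument is the regularity claim used in the inductive step, together with the bookkeeping at the extreme values $k=1$, $k=b$, and $k=b+1$ where some of the blocks of $\tilde{p}_i$'s or $g^{(i)}$'s degenerate (in particular $g^{(b)}$ is a unit). The device of Proposition~\ref{Prop4-1}---replacing $\tilde{p}_{a+b+1-k}$ by $z^a g^{(k-1)}$---is what keeps the colon computation by $z^j$ transparent, mirroring the role of Proposition~\ref{Prop3-5} in the proof of Lemma~\ref{KeyLem1}.
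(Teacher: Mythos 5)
Your proof is correct and takes essentially the same approach as the paper: Lemma~\ref{lem:4-2} for the base case, Proposition~\ref{Prop4-1} to substitute $z^ag^{(k-1)}$ for $\tilde{p}_{a+b+1-k}$, a regular-sequence reduction modulo $z$ (equivalent to the paper's identification $g^{(i)}|_{z=0}=i!\,e_{b-i}$) to justify the colon computation, and Remark~\ref{Rem:3-7} for the strict inclusions in Part~(2). The only presentational difference is that you handle the induction uniformly whereas the paper writes out $k=1$, $k=2$, and $k\ge 3$ as separate steps.
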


\begin{proof} 
Let $I_1, I_2,\ldots,I_{b+1}$ be the ideals defined in Proposition~\ref{Prop4-1}. 
Then it follows by Proposition~\ref{Prop4-1} that 
$$
I_k+(z)=\mathfrak{b}_k
$$
for $k=1,2,\ldots,b+1$. 
We divide the proof of (1) into three steps. 

{\em Step 1:} 
Since 
\begin{eqnarray*}
&&(\tilde{p}_a, \tilde{p}_{a+1}, \ldots, \tilde{p}_{a+b-1}, z, e_{b+1}, e_{b+2}, \ldots, e_n)  \\
&=& (p_a, p_{a+1}, \ldots, p_{a+b-1}, z, e_{b+1}, e_{b+2}, \ldots, e_n), 
\end{eqnarray*}
it follows by Remark~\ref{Rem3-3} that  
$$
\{\tilde{p}_a, \tilde{p}_{a+1}, \ldots, \tilde{p}_{a+b-1}, z, e_{b+1}, e_{b+2}, \ldots, e_n\}
$$
is also a regular sequence in $\tilde{R}$. 
Furthermore it follows by Lemma~\ref{lem:4-2} (2) that $I:z^{n-b}=I_1$. 
Hence, using Proposition~\ref{Prop4-1}, one easily sees that 
$$
\begin{array}{rcl} 
I:z^{n-b+j}= I_1:z^j &=& (\tilde{p}_a, \tilde{p}_{a+1}, \ldots, \tilde{p}_{a+b-1}, z^{a}g^{(0)},e_{b+1},e_{b+2},\ldots,e_n):z^j  \\
        &=&  (\tilde{p}_a, \tilde{p}_{a+1}, \ldots, \tilde{p}_{a+b-1}, z^{a-j}g^{(0)},e_{b+1},e_{b+2},\ldots,e_n) \\
\end{array}
$$
for $j=0,1,2,\ldots,a-1$, and $I:z^{c_2}=I:z^{n-b+a}=I_1:z^{a}=I_2$. 
Thus $(I:z^i)+(z)=\mathfrak{b}_1$ for $i=c_1,c_1+1,\ldots,c_2-1$. 
This proves the assertion for the case $k=1$.

{\em Step 2:} 
Since 
\begin{eqnarray*}
&&(\tilde{p}_a, \tilde{p}_{a+1}, \ldots, \tilde{p}_{a+b-2}, z, g^{(0)},e_{b+1},e_{b+2},\ldots,e_n) \\
&=&(p_a, p_{a+1}, \ldots, p_{a+b-2}, z, e_b,e_{b+1},\ldots,e_n), 
\end{eqnarray*}
it follows by Remark~\ref{Rem3-3} that  
$$
\{\tilde{p}_a, \tilde{p}_{a+1}, \ldots, \tilde{p}_{a+b-2}, z, g^{(0)},e_{b+1},e_{b+2},\ldots,e_n\}
$$
is also a regular sequence in $\tilde{R}$. 
Hence, using Proposition~\ref{Prop4-1},  
one easily sees that 
$$
\begin{array}{rcl} 
I:z^{c_2+j} = I_2:z^j &=& 
(\tilde{p}_a, \tilde{p}_{a+1}, \ldots, \tilde{p}_{a+b-2}, z^{a}g^{(1)},g^{(0)},e_{b+1},e_{b+2},\ldots,e_n):z^j  \\
        &=&  (\tilde{p}_a, \tilde{p}_{a+1}, \ldots, \tilde{p}_{a+b-2}, z^{a-j}g^{(1)},g^{(0)},e_{b+1},e_{b+2},\ldots,e_n) \\
\end{array}
$$
for $j=0,1,2,\ldots,a-1$, and $I:z^{c_3}=I_3$.   
Thus $(I:z^i)+(z)=\mathfrak{b}_2$ for $i=c_2,c_2+1,\ldots,c_3-1$. 
This proves the assertion for the case $k=2$.

{\em Step 3:} 
Let $k=3,4,\ldots,b+1$. 
In this step, we show that the equality
$$
(I:z^i)+(z)=\mathfrak{b}_k
$$
holds for $i=c_k, c_k+1, \ldots, c_{k+1}-1$, 
assuming the equality $I:z^{c_k}=I_k$. 
Noting that   
$$
\begin{array}{cl}
   & (\tilde{p}_a, \tilde{p}_{a+1}, \ldots, \tilde{p}_{a+b-k}, z, g^{(k-2)}, g^{(k-3)}, \ldots, g^{(1)}, g^{(0)},e_{b+1},e_{b+2},\ldots,e_n) \\
= & (p_a, p_{a+1}, \ldots, p_{a+b-k}, z, e_{b-k+2}, e_{b-k+3}, \ldots, e_{n-1}, e_n), 
\end{array}
$$
it follows by Remark~\ref{Rem3-3} that  
$$
\{(\tilde{p}_a, \tilde{p}_{a+1}, \ldots, \tilde{p}_{a+b-k}, z, g^{(k-2)}, g^{(k-3)}, \ldots, g^{(1)}, g^{(0)},e_{b+1},e_{b+2},\ldots,e_n\}
$$
is also a regular sequence in $\tilde{R}$. 
Hence, using Proposition~\ref{Prop4-1},  
one sees that 
\begin{eqnarray*}
I:z^{c_k+j} &=& I_k:z^j  \\
&=&  (\tilde{p}_a, \tilde{p}_{a+1}, \ldots, \tilde{p}_{a+b-k}, z^{a}g^{(k-1)}, g^{(k-2)}, g^{(k-3)},\\
&& \hspace{137pt} \ldots, g^{(1)}, g^{(0)}, e_{b+1},e_{b+2},\ldots,e_n):z^j  \quad \\
&=&  (\tilde{p}_a, \tilde{p}_{a+1}, \ldots, \tilde{p}_{a+b-k}, z^{a-j}g^{(k-1)}, g^{(k-2)}, g^{(k-3)},\\
&& \hspace{137pt} \ldots, g^{(1)}, g^{(0)},e_{b+1},e_{b+2},\ldots,e_n) \\[-20pt]
\end{eqnarray*}
for $j=0,1,2,\ldots,a-1$, and $I:z^{c_{k+1}}=I_{k+1}$, where put $I_{b+2}=\tilde{R}$. 
Thus $(I:z^i)+(z)=\mathfrak{b}_k$ for $i=c_k, c_k+1,\ldots, c_{k+1}-1$. 
This proves the assertion for the cases $k=3,4,\ldots,b+1$, by induction on $k$. 
Furthermore one easily sees that 
 $(I:z^{c_{b+2}})+(z)=\mathfrak{b}_{b+2}$.

(2) 
Since $(I:z^i)+(z) \subset (I:z^{i+1})+(z)$ for all $i\geq 0$, 
it follows by (1) that 
$$
\mathfrak{b}_0 \subset \mathfrak{b}_1 \subset \mathfrak{b}_2 
\subset \cdots \subset \mathfrak{b}_{b+1} \subset \mathfrak{b}_{b+2}.  
$$
Furthermore, since every ideal $\mathfrak{b}_k$ $(0\leq k\leq b+1)$ is generated by a regular sequence of length $n+1$, 
it follows by Remark~\ref{Rem:3-7} that 
$$
\dim_K \tilde{R}/\mathfrak{b}_0 > \dim_K \tilde{R}/\mathfrak{b}_1 > 
\cdots > \dim_K \tilde{R}/\mathfrak{b}_{b+1} > \dim_K \tilde{R}/\mathfrak{b}_{b+2}=0
$$
as $K$-vector spaces. 
Hence we have
$$
\mathfrak{b}_0 \subsetneq \mathfrak{b}_1 \subsetneq \mathfrak{b}_2 
\subsetneq \cdots \subsetneq \mathfrak{b}_{b+1} \subsetneq \mathfrak{b}_{b+2}. 
$$
\end{proof}

Now we can give a proof of the main result of this section. 

\begin{proof}[Proof of Theorem \ref{Th4-1}]
Let $\mathfrak{b}_0, \mathfrak{b}_1, \ldots, \mathfrak{b}_{b+2}$ be the ideals defined in Lemmas~\ref{lem:4-2} and~\ref{KeyLem2}. 
Then it follows by Lemmas~\ref{lem:4-2} and~\ref{KeyLem2} that 
$$
U_j\cong \frac{\mathfrak{b}_{b+3-j}}{\mathfrak{b}_{b+2-j}}
$$
for $j=1,2,\ldots,b+2$. 
In particular, we have 
$$
U_1 \cong \frac{R}{(e_1,e_2,\ldots,e_n)}, 
$$ 
as desired. 
Next, for $j=2,3,\ldots,b+2$, 
a straightforward calculation leads to the following identities: 
$$
\frac{\mathfrak{b}_{b+3-j}}{\mathfrak{b}_{b+2-j}} \cong e_{j-1} \cdot \frac{\tilde{R}}{\mathfrak{b}_{b+2-j}} 
\cong \frac{\tilde{R}}{\mathfrak{b}_{b+2-j}:e_{j-1}}
$$
as $\tilde{R}$-modules. 
Hence, since  
$$
\begin{array}{rcl}
\mathfrak{b}_{b+2-j}:e_{j-1} &=& (p_a,p_{a+1},\ldots,p_{a+j-2},e_j,e_{j+1},\ldots,e_n,z):e_{j-1} \\
                                    &=& (p_{a-1},p_a,\ldots,p_{a+j-3},e_j,e_{j+1},\ldots,e_n,z) \\
\end{array}
$$ 
for $j=2,3,\ldots,b+1$, by Lemma~\ref{lem:colon}, we have 
$$
U_j \cong \frac{R}{(p_{a-1},p_a,\ldots,p_{a+j-3},e_j,e_{j+1},\ldots,e_n)} 
$$
for $j=2,3,\ldots,b+1$, as desired. 
Similarly, it is easy to show that 
$$
U_{b+2} \cong \frac{R}{(p_{a-1}, p_a, \ldots, p_{a+b-1}, e_{b+2}, e_{b+3}, \ldots, e_n)}
$$ 
when $b<n-1$. 
If $b=n-1$, then we have
$$
\begin{array}{rcl}
\mathfrak{b}_{0}:e_n &=& (p_a,p_{a+1},\ldots,p_{a+n-1},z):e_n \\
                           &=& (p_{a-1},p_a,\ldots,p_{a+n-2},z) \\
\end{array}
$$
by Lemma~\ref{lem:colon}. 
Hence it follows that  
$$
U_{b+2} \cong \frac{R}{(p_{a-1},p_a,\ldots,p_{a+n-2})} .
$$
\end{proof}

\begin{remark}\label{Rem4-7} 
\begin{itemize} 
\item[(1)] 
Suppose that $a=2$ in Theorem~\ref{Th4-1}. That is,
let
 $$A=\tilde{R}/
(\tilde{p}_2, \tilde{p}_3, \ldots, \tilde{p}_{b+2}, \tilde{e}_{b+2}, \tilde{e}_{b+3}, \ldots, \tilde{e}_{n+1}). 
$$
Then, by Theorem~\ref{Th4-1}, 
all central simple modules of $A$ are isomorphic to the complete intersection 
$R/(e_1, e_2, \ldots, e_n)$. 
\item[(2)]
Let 
$A=\tilde{R}/(\tilde{p}_a, \tilde{p}_{a+1}, \ldots, \tilde{p}_{a+n})$ and 
$B=\tilde{R}/(\tilde{p}_a, \tilde{p}_{a+1}, \ldots, \tilde{p}_{a+n-1}, \tilde{e}_{n+1})$. 
Then it follows by Theorems~\ref{Th3-1} and~\ref{Th4-1} that 
$A$ and $B$ have the same central simple modules. 
\end{itemize}
\end{remark}

%
%

\section{The strong Lefschetz property for complete intersections}
\label{slp}

In this section, using the results in previous sections 3 and 4, we observe the strong
Lefschetz property for complete intersections in Theorem~\ref{thirdTh} . 

\begin{notation}
Let $n \geq 1$, $a \geq 2$ and $m=1,2,\ldots,n$ be integers. 
Let $R=K[x_1,x_2,\ldots,x_n]$. 
We define Artinian complete intersection $K$-algebras
$$
A_n(a,m)=R/(p_a,p_{a+1}\ldots,p_{a+m-1},e_{m+1},e_{m+2},\ldots,e_{n})
$$
for $m=1,2,\ldots,n-1$, 
$A_n(a,n)=R/(p_a,p_{a+1}\ldots,p_{a+n-1})$ and $A_n(1,n)=R/(e_1,e_2,\ldots,e_n)$. 
We denote
$$ 
\mathcal A_n =\{A_n(a,m) \mid a \geq 2,m=1,2,\ldots,n \} \cup \{A_n(1,n)\}
$$
for $n=1,2,\ldots$. 
\end{notation}

Applying Theorem~\ref{Th3-1} and Theorem~\ref{Th4-1}, 
let's see what happens when we
continue to consider central simple modules, as illustrated in the following example.

\begin{example}\label{Ex-diagram} 
In the diagram bellow, 
we simply denote ${}_na_m=A_n(a,m)$, 
and depict the derived central simple modules of $(A_5(3,3),x_5)$ 
and  
$(A_5(8,3),x_5)$. 
If two labels are connected by an arrow like ${}_na_m \rightarrow {}_{n-1}(a-1)_{m^{\prime}}$ 
with the terminal label ${}_{n-1}(a-1)_{m^{\prime}}$ in $U_i$-column, 
then $A_{n-1}(a-1,m^{\prime})$ is the $i$-th central simple module of $(A_n(a,m), x_n)$. 
For example: 
\begin{itemize} 
\item 
It follows by Theorem~\ref{Th4-1} that 
$(A_5(8,3),x_5)$ have the following four central simple modules 
$$
U_1 \cong A_4(1,4), U_2 \cong A_4(7,1), U_3 \cong A_4(7,2), U_4 \cong A_4(7,3),   
$$
i.e., 
$$
{}_58_3 \rightarrow {}_41_4, {}_58_3 \rightarrow {}_47_1, {}_58_3 \rightarrow {}_47_2, {}_58_3 \rightarrow {}_47_3. 
$$
\item 
Similarly, it follows by Theorem~\ref{Th3-1}(2) that 
$(A_3(6,3),x_3)$ have the following three central simple modules 
$$
U_1 \cong A_2(1,2), U_2 \cong A_2(5,1), U_3 \cong A_2(5,2), 
$$
i.e., 
$$
{}_36_3 \rightarrow {}_21_2, {}_36_3 \rightarrow {}_25_1, {}_36_3 \rightarrow {}_25_2. 
$$ 
\item 
Furthermore, Theorem~\ref{Th3-1}(1) gives the following three arrows, 
$$
{}_41_4 \rightarrow {}_31_3 \rightarrow {}_21_2 \rightarrow {}_11_1. 
$$ 
\end{itemize}

\[\begin{tikzcd}
	{\mathcal{A}_5} & {{}_53_{3}} &&&&&& {{}_58_{3}} \\
	{\mathcal{A}_4} & {{}_42_{3}} & {{}_42_{2}} & {{}_42_{1}} & {{}_41_{4}} & {{}_47_{1}} & {{}_47_{2}} & {{}_47_{3}} \\
	{\mathcal{A}_3} & {{}_31_{3}} & {{}_31_{3}} & {{}_31_{3}} & {{}_31_{3}} & {{}_36_{1}} & {{}_36_{2}} & {{}_36_{3}} \\
	{\mathcal{A}_2} &&&& {{}_21_{2}} & {{}_25_{1}} & {{}_25_{2}} \\
	{\mathcal{A}_1} &&&& {{}_11_{1}} & {{}_14_{1}} \\
	& {U_{4}} & {U_{3}} & {U_{2}} & {U_{1}} & {U_{2}} & {U_{3}} & {U_{4}}
	\arrow[from=1-8, to=2-5]
	\arrow[from=1-8, to=2-6]
	\arrow[from=1-8, to=2-7]
	\arrow[from=1-8, to=2-8]
	\arrow[from=2-5, to=3-5]
	\arrow[from=3-5, to=4-5]
	\arrow[from=4-5, to=5-5]
	\arrow[from=2-6, to=3-6]
	\arrow[from=3-6, to=4-6]
	\arrow[from=4-6, to=5-6]
	\arrow[from=2-7, to=3-7]
	\arrow[from=3-7, to=4-7]
	\arrow[from=2-8, to=3-8]
	\arrow[from=2-6, to=3-5]
	\arrow[from=2-7, to=3-5]
	\arrow[from=2-7, to=3-6]
	\arrow[from=3-6, to=4-5]
	\arrow[from=3-7, to=4-5]
	\arrow[from=3-7, to=4-6]
	\arrow[from=4-6, to=5-5]
	\arrow[from=4-7, to=5-5]
	\arrow[from=4-7, to=5-6]
	\arrow[from=3-8, to=4-5]
	\arrow[from=3-8, to=4-6]
	\arrow[from=3-8, to=4-7]
	\arrow[from=2-8, to=3-5]
	\arrow[from=2-8, to=3-6]
	\arrow[from=2-8, to=3-7]
	\arrow[from=1-2, to=2-2]
	\arrow[from=1-2, to=2-3]
	\arrow[from=1-2, to=2-4]
	\arrow[from=1-2, to=2-5]
	\arrow[from=2-2, to=3-2]
	\arrow[from=2-2, to=3-3]
	\arrow[from=2-2, to=3-4]
	\arrow[from=2-2, to=3-5]
	\arrow[from=2-3, to=3-3]
	\arrow[from=2-3, to=3-4]
	\arrow[from=2-3, to=3-5]
	\arrow[from=2-4, to=3-4]
	\arrow[from=2-4, to=3-5]
	\arrow[from=3-4, to=4-5]
	\arrow[from=3-3, to=4-5]
	\arrow[from=3-2, to=4-5]
\end{tikzcd}\]

As noted in Remark~\ref{Rem4-7}, we can see the following:
\begin{itemize}
\item
The central simple modules of $(A_4(2,m),x_4)$ are all isomorphic to $A_3(1,3)$ for $m=1,2,3$.
\item
The sets of central simple modules of $(A_3(6,2),x_3)$ and $(A_3(6,3),x_3)$ coincide. 
\end{itemize}
\end{example}

\begin{theorem}\label{thirdTh} 
Every Artinian complete intersection $K$-algebra 
defined in Theorem~\ref{Th3-1} and Theorem~\ref{Th4-1} has the SLP. 
\end{theorem}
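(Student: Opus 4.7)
The plan is to proceed by induction on $n$, combining Theorem~\ref{csm} (the central simple module criterion for SLP of Artinian Gorenstein algebras) with the structural computations of Theorems~\ref{Th3-1} and~\ref{Th4-1}. The key point delivered by those two theorems is that the family $\bigcup_{n\ge 1}\mathcal A_n$ is closed under the operation ``pass to the central simple modules with respect to $\bar z$'': every central simple module of $(A,\bar z)$ for $A\in\mathcal A_{n+1}$ is a principal $\tilde R$-module whose defining ideal in $R = K[x_1,\ldots,x_n]$ presents a member of $\mathcal A_n$. This sets up a clean induction step $\mathcal A_n \leadsto \mathcal A_{n+1}$.

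For the base case $n=1$, every algebra in $\mathcal A_1$ is a univariate Artinian quotient $K[x_1]/(x_1^a)$ (or the trivial $K[x_1]/(x_1)$), which manifestly has the SLP. For the inductive step I fix $A\in\mathcal A_{n+1}$ and apply Theorem~\ref{csm} with the linear form $y=\bar z$ (the image of $x_{n+1}=z$). Theorems~\ref{Th3-1} and~\ref{Th4-1} exhibit each central simple module $U_j$ of $(A,\bar z)$ as a principal $\tilde R$-module isomorphic to an algebra $R/J_j$ lying in $\mathcal A_n$ (the degenerate cases $a=2$, in which a $p_1$ would formally appear, collapse to $A_n(1,n)$ via Newton's identity, as recorded in Remarks~\ref{Rem3-10} and~\ref{Rem4-7}). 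By the inductive hypothesis each $R/J_j$ has the SLP as an $R$-algebra, so there is a linear form $\ell_j\in R_1$ whose powers act with full rank on $R/J_j$.

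The remaining point is to upgrade the SLP of $U_j$ \emph{as an $R/J_j$-algebra} to the SLP of $U_j$ \emph{as an $A$-module}, which is what Theorem~\ref{csm} requires; this will be the one step requiring real care, but it is routine. Because the ideals $\mathfrak a_k$ and $\mathfrak b_k$ appearing in Lemmas~\ref{KeyLem1} and~\ref{KeyLem2} all contain $z$, the element $\bar z$ annihilates every $U_j$, so the $A$-action on $U_j$ factors through $A/\bar z A$; via the composition $R\hookrightarrow \tilde R \twoheadrightarrow A$, the linear form $\ell_j$ lifts to a linear form of $A$ whose action on $U_j$ agrees with the given action on $R/J_j$. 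Hence every $U_j$ has the SLP as an $A$-module, and Theorem~\ref{csm} then yields the SLP for $A$, closing the induction.
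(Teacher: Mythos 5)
Your proposal is correct and follows essentially the same route as the paper: induction on $n$ with base case $n=1$, using Theorems~\ref{Th3-1} and~\ref{Th4-1} to show that every central simple module of $(\xi,\bar z)$ for $\xi\in\mathcal A_n$ lies in $\mathcal A_{n-1}$, then invoking Theorem~\ref{csm}. The only difference is that you spell out two points the paper leaves implicit (the transfer of SLP from $R/J_j$ as an $R$-algebra to $U_j$ as an $A$-module via $\bar z U_j=0$, and the collapse of the degenerate $a=2$ cases to $A_{n-1}(1,n-1)$ by Newton's identities), both of which are correct and worth noting.
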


\begin{proof}
It is enough to show that all elements of $\mathcal A_n$ 
have the SLP. We prove this by induction on $n$. 
If $n=1$, then it is clear that all elements of $\mathcal A_1$ 
have the SLP, since any element in $\mathcal A_1$ is of the form $k[x_1]/(x_1^i)$ for some $i \geq 1$. 
Let $n > 1$ and take an elemenet $\xi\in \mathcal A_n$. Then Theorem~\ref{Th3-1} and Theorem~\ref{Th4-1} assert that all central simple modules of $\xi$ are contained in $\mathcal A_{n-1}$. Using the induction hypothesis, all central simple modules of $\xi$ have the SLP.  This completes the proof thanks to Theorem~\ref{csm}. 
\end{proof}

\begin{remark}\label{Rem5-1} 
We can summarize our results in terms of "binary tree".
Consider the following families of ideals in $K[x_1, x_2, \ldots, x_n]$, 
$\mathcal F_{n} = \mathcal B_{n} \cup \mathcal C_{n}$ for all $n \geq 1$, where 
$$
\begin{array}{rcl} 
\mathcal B_{n} & = &
\{
(p_a,p_{a+1}, \ldots, p_{a+n-1}) : x_{n}^{i} \mid a \geq 1, 0 \leq i \leq an-1
\},\\
\mathcal C_{n} & = &
\{
(p_a,p_{a+1}, \ldots, p_{a+b-1}, e_{b+1}, e_{b+2}, \ldots, e_{n}) : x_{n}^{i} \mid \\
&  & \hspace{98pt} a \geq 2,1 \leq b \leq n-1, 0 \leq i \leq (a-1)b+n-1
\}.\\
\end{array}
$$
Then, by our results, it is easy to see that the family $\mathcal F = \bigcup _{n \geq 1}\mathcal F_{n}$ is a binary tree of complete intersections with the SLP. 
\end{remark}

\section*{Acknowledgement}
This work was supported by JSPS KAKENHI Grant Number JP20K03508.
 Our proof technique to use a binary tree of ideals was inspired by the definition of the ``principal radical system'' introduced by Hochster—Eagon \cite{hochster_eagon}.   This idea was used 
  inevitably in the papers \cite{mcdaniel_watana_1} and \cite{mcdaniel_watana_2}. The third author learned, from Larry Smith, that the idea was originally used by K. Oka in his paper~\cite{oka} (see Lam and Reyes~\cite{lam_reyes}). 





\vspace{30pt}
\today
\end{document}